\newtheorem{theorem}{Theorem}[section]
\newtheorem*{acknowledgement*}{\protect\acknowledgementname}
\newaliascnt{setup}{theorem}
\newtheorem{setup}[setup]{Setup}
\newaliascnt{question}{theorem}
\newaliascnt{lemma}{theorem}
\newtheorem{lemma}[lemma]{Lemma}
\newaliascnt{conjecture}{theorem}
\newaliascnt{proposition}{theorem}
\newtheorem{proposition}[proposition]{Proposition}
\newaliascnt{corollary}{theorem}
\newtheorem{corollary}[corollary]{Corollary}
\newaliascnt{problem}{theorem}
\newaliascnt{claim}{theorem}
\newtheorem{claim}[claim]{Claim}
\theoremstyle{definition}
\newaliascnt{definition}{theorem}
\newtheorem{definition}[definition]{Definition}
\newaliascnt{example}{theorem}
\theoremstyle{remark}
\newaliascnt{remark}{theorem}
\newtheorem{remark}[remark]{Remark}
\newaliascnt{remarks}{theorem}
\def\({$($}
\def\){$)$}
\def\Spec{\textrm{Spec}}
\def\QQ{\mathbb Q}
\newcommand{\Zl}{\mathbb{Z}_{\ell}}
\newcommand{\Qp}{\mathbb{Q}_p}
\newcommand{\Qpbar}{\overline{\mathbb Q}_p}
\newcommand{\Ql}{\mathbb{Q}_{\ell}}
\newcommand{\Qlbar}{\overline{\mathbb{Q}}_{\ell}}
\newcommand{\fisoc}[1]{\textbf{F-Isoc}(#1)}
\newcommand{\fisocd}[1]{\textbf{F-Isoc}^{\dagger}(#1)}
\providecommand{\acknowledgementname}{Acknowledgement}
\def\bQ{{\mathbb Q}}
\def\bZ{{\mathbb Z}}
\def\barQl{{\overline{\bQ}_\ell}}
\def\CC{{\mathbb C}}
\def\OKN{{\mathcal O_K[1/N]}}
\def\X{{\mathfrak X}}
\def\Y{{\mathfrak Y}}
\def\U{{\mathfrak U}}
\def\W{{\mathfrak W}}
\def\D{{\mathfrak D}}
\def\bL{{\mathbb L}}
\def\mL{{\mathcal L}}
\def\mV{{\mathcal V}}
\def\mM{{\mathcal M}}
\def\p{{\mathfrak p}}
\def\GL2{\mathrm{GL}_2}
\def\SL2{\mathrm{SL}_2}
\def \Fq{\mathbb{F}_q}
\def \sHom{\mathcal{H}\mathit{om}}
\numberwithin{equation}{section}
\begin{document}

	\title{Constructing abelian varieties from rank 2 Galois representations}
	\author{Raju Krishnamoorthy}
	\email{krishnamoorthy@alum.mit.edu}
	\address{Arbeitsgruppe Algebra und Zahlentheorie, Bergische Universit\"at Wuppertal, F 13.05 Gaußstraße 20, Wuppertal 42119, Germany}
	\author{Jinbang Yang}
	\email{yjb@mail.ustc.edu.cn}
	\address{School of Mathematical Sciences, University of Science and Technology of China, Hefei, Anhui 230026, PR China 
	\url{https://jbyang1987.github.io/}}
	
	\author{Kang Zuo}
	\email{zuok@uni-mainz.de}
	\address{School of Mathematics and Statistics, Wuhan University, Luojiashan, Wuchang, Wuhan, Hubei, 430072, P.R. China.}
	\address{Institut f\"ur Mathematik, Universit\"at Mainz, Mainz 55099, Germany}
	
	\subjclass[2020]{11G05,  14K15(primary)}
	\keywords{Abelian scheme, Drinfeld modular curve, Langlands program,  Kodaira-Spencer map, Baily-Borel compactification}

\begin{abstract}
Let $U$ be a smooth affine curve over a number field $K$ with a compactification $X$ and let $\bL$ be a rank $2$, geometrically irreducible lisse $\overline{\bQ}_\ell$-sheaf on $U$ with cyclotomic determinant that extends to an integral model, has Frobenius traces all in some fixed number field $E\subset \Qlbar$, and has bad, infinite reduction at some closed point $x$ of $X\setminus U$. We show that $\bL$ occurs as a summand of the cohomology of a family of abelian varieties over $U$. The argument follows the structure of the proof of a recent theorem of Snowden-Tsimerman, who show that when $E=\mathbb Q$, then $\bL$ is isomorphic to the cohomology of an elliptic curve $E_U\rightarrow U$.
\end{abstract}

\maketitle

\tableofcontents

\section{Introduction}
To state our main result, we require the following definition and setup.

\begin{definition} Let $B/k$ be a smooth variety over a finitely generated field and let $\ell\neq \text{char}(k)$ be a prime. An abelian scheme $g\colon A_B\rightarrow B$ is said to be \emph{of $\text{SL}_2$-type} if there is a decomposition of lisse $\Qlbar$-sheaves on $B$:
\begin{equation}\label{eqn:SL2}\displaystyle R^1g_{*}\Qlbar\cong \bigoplus_i \bL_i^{m_i},\end{equation}
where each $\bL_i$ is a geometrically irreducible rank $2$ lisse $\Qlbar$-sheaf on $B$ with cyclotomic determinant: $\bigwedge^2\bL_i\cong \barQl(1)$.
\end{definition}

\begin{setup}\label{setup:main}Let $K$ be a number field and let $X/K$ be a smooth, proper, geometrically irreducible curve. Let $U\subset X$ be a Zariski open and dense subset of $X$ with reduced complementary divisor $D$. Assume that $D$ is non-empty.
\end{setup}

Let $f\colon A_U\rightarrow U$ be a generically simple abelian scheme that is of $\SL2$-type and has bad, infinite reduction along some non-empty subset of $D$. Then the following statements hold for each direct summand $\bL_i$ of $R^1f_{*}\Qlbar$.
\begin{enumerate}
\item $\bL_i$ is a geometrically irreducible, rank 2 lisse $\Qlbar$-sheaf on $U$ with cyclotomic determinant: $\bigwedge^2\bL_i\cong \barQl(1)$.
\item There exists a proper smooth model $\X$ over $\OKN$, an open subset $\U$ of $\X$ extending $U$, an $\ell$-adic local field $M$, and a lisse $\mathcal O_M$-sheaf $\mL_i$ on $\U$ such that $$(\mL_i\otimes_{\mathcal O_M}\Qlbar)|_{U} \cong \bL_i.$$
\item There exists a number field $E$ such that for each closed point $x$ of $\U$, the trace of Frobenius on $(\mL_i)_x$ is in $E\subset \Qlbar$.
\item The local (geometric) monodromy of $\bL_i$ is infinite around some non-empty subset of $D$.
\end{enumerate}
In \cite{snowden2018constructing}, Snowden-Tsimerman prove that when $E=\bQ$, the above four conditions characterize those lisse $\Qlbar$-sheaves coming from families of elliptic curves. More precisely, they prove the following.

\begin{theorem}[Snowden-Tsimerman] Notation as in \autoref{setup:main} and let $\bL$ be a lisse $\Qlbar$-sheaf on $U$ satisfying the above conditions (1)-(4), with $E=\bQ$. Then there exists a family of elliptic curves
$$f: A_U\to U$$
and an isomorphism $\bL \cong {\rm R}^1f_*(\barQl)$.
\end{theorem}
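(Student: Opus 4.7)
The plan is to reconstruct the elliptic family from $\bL$ by working modulo a prime $\p$ of good reduction of the integral model, invoking the Langlands correspondence for $\GL2$ over function fields to realize the mod-$\p$ companion as the cohomology of a family of elliptic curves parametrized by a Drinfeld modular curve, and then lifting the result back to characteristic zero using the infinite local monodromy at a cusp. The hypothesis $E=\bQ$ is essential: rationality forces the rank-$2$ motive attached to $\bL$ to be of elliptic type, rather than of $\GL2$-type attached to a higher-dimensional simple factor of some modular Jacobian.

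First, I would use integrality and rationality of Frobenius traces to upgrade $\bL$ to a strictly compatible system $\{\bL_{\ell'}\}$ of lisse $\Qlbarp$-sheaves on $\U$, and also to produce, for each residue characteristic $p$ of $\OKN$, an overconvergent $F$-isocrystal companion $\bL_p^{\dagger}$ on $\Up$. These companion sheaves are furnished by Deligne's companions conjecture in the form proved by Drinfeld and Abe over smooth curves.

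Next, fix a closed point $\p \subset \Spec \OKN$ of residue characteristic distinct from $\ell$, and restrict $\bL_\ell$ to the curve $\Up$ over the residue field. The Langlands correspondence for $\GL2$ over function fields attaches to this sheaf a cuspidal automorphic representation $\pi$ of $\GL2$ over the function field of $\Xp$. Rationality of the Hecke eigenvalues together with a Steinberg local component at $x$ forced by the infinite monodromy allow one to invoke Jacquet--Langlands and Drinfeld's construction of modular parametrizations, producing a non-isotrivial elliptic curve $E_\p \to \Up$ whose first $\ell$-adic cohomology recovers $\bL_\ell|_{\Up}$.

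The final and hardest step is to lift $E_\p$ across the entire base to a characteristic-zero family. Near a cusp $x$ with infinite local monodromy, the completed local ring of $\X$ at $x$ carries a canonical Tate-curve structure producing a formal lift; away from $x$, the Hodge and Frobenius filtrations on $\bL_p^{\dagger}$, together with the Kodaira--Spencer map, yield canonical crystalline lifts in formal neighborhoods of ordinary fibers. Packaging the $j$-invariant as a period map to the Baily--Borel compactification of $\overline{\mathcal{M}}_{1,1}$ and using rigidity should force these local lifts to assemble into a single algebraic elliptic family over $U/K$, after which a Chebotarev argument identifies $R^1 f_{*}\Qlbar$ with $\bL$. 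The main obstacle is precisely this algebraization and descent: producing a single algebraic elliptic family over $U$ out of mod-$\p$ families plus formal lifts requires the delicate coincidence that the $\ell$-adic, crystalline, and Hodge-theoretic lifts all agree, and it is here that the infinite-monodromy hypothesis and the Baily--Borel compactification mentioned in the keywords play their central roles.
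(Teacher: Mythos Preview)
Your first two steps --- restricting $\mathcal L$ to $\U_\p$ and invoking Drinfeld's work on $\GL2$-Langlands over function fields to produce an elliptic curve $E_\p\to\U_\p$ realizing $\bL|_{\U_\p}$ --- are essentially correct and match what Snowden--Tsimerman do (though the companions machinery is not needed: one simply restricts the integral model $\mathcal L$ to the special fiber). The genuine gap is in your third step, the lifting. You propose to lift a \emph{single} $E_\p$ to characteristic zero by gluing a Tate-curve formal lift at the cusp with Serre--Tate/crystalline canonical lifts at ordinary points, and then algebraizing. This does not work as stated: the formal lifts are not canonical (Serre--Tate only gives a canonical lift of an \emph{ordinary} abelian variety over a point, not of a family over a curve), they need not agree on overlaps, and even if they did, algebraizing a formal family over an affine curve is a serious obstruction that ``rigidity'' alone does not address. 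You also never use that the construction works for \emph{infinitely many} $\p$, which is the crux.

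The actual Snowden--Tsimerman argument replaces this lifting step with a moduli-theoretic counting argument. After producing $E_\p$ for each $\p\gg 0$, they factor the resulting map $\X_\p\to\bar{\mathcal M}_{1,1}(\ell^3)$ through a power of Frobenius to make it generically separable, and then Riemann--Hurwitz bounds its degree by some $d$ \emph{independent of $\p$}. One then forms the finite-type $\OKN$-scheme $\mathcal H=\mathrm{Hom}^{\leq d}_{\OKN}(\X,\bar{\mathcal M}_{1,1}(\ell^3))$ and cuts out a closed subscheme $\mathcal H_\infty$ parametrizing those maps whose associated elliptic curve has integral $\ell$-adic monodromy isomorphic to $\rho$. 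Since $\mathcal H_\infty$ is of finite type over $\OKN$ and has $\kappa(\p)$-points for infinitely many $\p$, it must have a $K'$-point for some finite $K'/K$; Weil restriction and Faltings' isogeny theorem finish. The essential idea you are missing is the uniform degree bound and the passage through a finite-type moduli space, which converts ``mod-$\p$ existence for infinitely many $\p$'' into characteristic-zero existence without any explicit deformation theory.
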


In this article, we consider the situation where Frobenius traces are all contained in a fixed number field $E$.

\begin{theorem}\label{theorem:main} Notation as in \autoref{setup:main} and let $\bL$ be a lisse $\Qlbar$-sheaf on $U$ satisfying conditions (1)-(4). Then there exists an abelian scheme
$$f: A_U\to U$$
such that $\bL$ is a summand $\mathrm{R}^1f_*(\barQl)$.
\end{theorem}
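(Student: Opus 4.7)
The plan is to follow the architecture of Snowden-Tsimerman, with two modifications: the target Shimura variety will parametrize abelian varieties with an action of the trace field $E$ (rather than elliptic curves), and the conclusion is correspondingly weaker in that $\bL$ will appear only as a summand of $R^1f_*\Qlbar$ rather than as the whole sheaf.

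First I would exploit the integral model hypothesis to put $\bL$ into a full compatible system. Using that Frobenius traces lie in $E$, together with the theory of $\ell$-adic and crystalline companions (Drinfeld, L.\,Lafforgue, Abe, Kedlaya, Abe-Esnault), I would produce, for each finite place $\lambda$ of $E$ and for almost every closed point of $\Spec\OKN$, a $\lambda$-adic companion $\bL_\lambda$ of $\bL$ on the appropriate fiber $\U_\p$. In particular, at a carefully chosen prime $\p$ above a rational prime $p$, I would obtain a crystalline companion: an overconvergent $F$-isocrystal on $\U_\p$ realizing $\bL_\lambda$ for every $\lambda\mid p$.

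From the overconvergent $F$-isocrystal I would extract, via $p$-adic non-abelian Hodge theory, a rank-$2$ parabolic Higgs bundle on $\U_\p$ whose Kodaira-Spencer map I must control. The infinite local monodromy around the chosen bad cusp in $D$ forces this Kodaira-Spencer map to be maximal, which is the hallmark of variations of Hodge structure coming from abelian varieties. Summing over all places of $E$ above $p$ then yields a rank-$2[E:\bQ]$ object with an $E$-action, maximal Higgs field, and the correct cohomological weights to be the $H^1$ of a family of abelian varieties with $E$-multiplication; this is exactly what is parametrized by a Hilbert or quaternionic Shimura variety $\mathcal S$ (equivalently, by a Drinfeld modular curve under the relevant $p$-adic uniformization near the bad cusp).

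The main obstacle, exactly as in Snowden-Tsimerman, is to promote this analytically defined period datum to an honest algebraic morphism $U\to\mathcal S$ over $K$. Here one would combine (i) Drinfeld's $p$-adic rigid-analytic uniformization of $\mathcal S$ near its cusps, which matches the $p$-adic picture built above around the infinite-monodromy point $x$; (ii) the Baily-Borel compactification of $\mathcal S$, across whose boundary one extends by GAGA to obtain algebraicity; and (iii) a descent argument to $K$ using the trace field $E$. Pulling back the universal abelian scheme over $\mathcal S$ along the resulting algebraic map then produces $f\colon A_U\to U$, and decomposing $R^1f_*\Qlbar$ under the $E$-action identifies $\bL$ as one of its direct summands. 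The truly delicate step will be the algebraic extension across the Baily-Borel boundary: it is precisely here that the hypothesis of genuinely infinite local monodromy at the cusp $x$ becomes indispensable, for it is what pins down the boundary behavior of the period map and permits the GAGA argument to go through.
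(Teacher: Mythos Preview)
Your proposal has a genuine structural gap: you never actually produce an abelian scheme, not even on a single fiber in positive characteristic, and the ``period map'' you describe is not well-defined from the data you have assembled.

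Concretely: companions give you, for each $\p$, an overconvergent $F$-isocrystal on $\U_\p$. But there is no mechanism that turns such an $F$-isocrystal (or an associated Higgs bundle) directly into an algebraic map from the characteristic-$0$ curve $U_K$ to a Shimura variety $\mathcal S$. Your ``analytically defined period datum'' lives, at best, on the special fiber $\U_\p$, not on $U_K$ or $U_{\mathbb C}$; so the GAGA/Baily-Borel step has no input to act on. If instead you meant to work over $\mathbb C$ via Simpson's nonabelian Hodge theory, then knowing that the Higgs bundle underlies a polarized variation of Hodge structure is essentially equivalent to what you are trying to prove. Either way the argument is circular or empty at the crucial step.

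The paper's proof is organized quite differently. The source of abelian varieties is Drinfeld's first Langlands theorem for $\mathrm{GL}_2$ over function fields: for infinitely many $\p$ (those split in $E$) it outputs an honest abelian scheme $A_{\U_\p}\to\U_\p$ of dimension $[E:\bQ]$ whose cohomology contains $\bL|_{\U_\p}$. One then bounds, uniformly in $\p$, the degree of the Hodge line bundle by a Frobenius-untwisting argument on the constituent height-$2$ $p$-divisible groups (this is where infinite monodromy is used, to guarantee supersingular points and force the Kodaira-Spencer map to be generically an isomorphism). With this uniform bound, one builds a finite-type moduli stack $\mathcal H_\infty$ over $\OKN$ parametrizing covers of $\X$ mapping to $\mathscr A^*_{8h,1,\ell^3}$ together with compatible maps of torsion sheaves; a rigidity theorem of Saito shows $\mathcal H_\infty$ has relative dimension $0$. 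Since $\mathcal H_\infty$ has $\kappa(\p)$-points for infinitely many $\p$, it has a $K'$-point for some finite $K'/K$, and a Weil restriction plus Faltings' semisimplicity finishes. The bridge from characteristic $p$ to characteristic $0$ is thus the finite-type moduli space, not a $p$-adic uniformization/GAGA argument; this is the idea your sketch is missing.
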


\begin{remark}We view \autoref{theorem:main} as providing a bit of further evidence for the relative Fontaine-Mazur conjecture, as in \cite[Conjecture, p.292]{LZ17} or \cite[Conjecture 1]{petrov2020}.

An observation of Litt implies that for an arithmetic local system, (2) will automatically, hold: see Step 2 of the proof of \cite[ Theorem 1.1.3]{litt2021} or \cite[Theorem 6.1]{petrov2020}. (See also the argument in \cite[Proposition 4.1]{LZ17}.) Therefore, to prove the relative Fontaine-Mazur conjecture for rank $2$ local systems that have infinite monodromy around some point, it suffices to bound the field generated by Frobenius traces. This task seems to be quite difficult in general; for some progress on this question, see \cite{shimizu2020}.
\end{remark}

\begin{remark}
We do not have any idea how to get around point (4). As will be explained in the proof sketch, this is because we crucially use some of Drinfeld's early work on the Langlands correspondence for $\text{GL}_2$ over function fields. More specifically, he is able to show that if $\bL$ is an irreducible rank 2 lisse $\Qlbar$-sheaf over a curve $U/\mathbb{F}_q$ with cyclotomic determinant and infinite monodromy at $\infty$, then $\bL$ comes from a family of abelian varieties over $U$. His proof finds such an abelian scheme as an isogeny factor of a Drinfeld modular curve over $\mathbb{F}_q(U)$.\footnote{One crucial technique he uses is an equal-characteristic $p$ rigid-analytic uniformization of the Drinfeld modular curve at a cusp; indeed, the Drinfeld modular curve will be totally degenerating.} When we do not assume infinite monodromy at $\infty$, then no such result is known; more specifically, the output of his later work on the Langlands correspondence will imply that there exists an open subset $V\subset U\times U$ and a smooth projective morphism $f\colon S\rightarrow V$ of \emph{relative dimension 2} such that $\bL\boxtimes \bL^*|_V$ is a summand of $R^2f_*\Qlbar$. See \cite[Remark 1.4, Question 9.1]{krishnamoorthyrank2} and \cite[Section 1]{krishnamoorthypal2018} for related discussion.
\end{remark}

Our argument largely follows \cite{snowden2018constructing}, but we need several new ingredients. To explain this, we quickly reprise their argument in the following remark.
\begin{remark}[Sketch of \cite{snowden2018constructing}]For notational simplicity, assume that $\mathbb L$ corresponds to a representation:

$$\rho\colon \pi_1(U_K)\rightarrow \mathrm{GL}_2(\mathbb Z_{\ell}),$$
with the property that the mod $\ell^3$ residual representation $\pi_1(U_K)\rightarrow \mathrm{GL}_2(\mathbb Z/\ell^3\mathbb Z)$ is trivial.

\begin{enumerate}
\item Using Drinfeld's first work on the Langlands correspondence over finite fields, for for all $\p\gg 0$ they construct families of elliptic curves over $\U_{\p}$ with trivial $\ell^3$ torsion whose monodromy is isomorphic to $\rho|_{\U_{\p}}$. (This involves an implicit isogeny from what is produced by Drinfeld's theorem.) These families in turn induce maps $$\lambda_{\p}\colon \X_{\p}\rightarrow \bar{\mathcal{M}}_{1,1}(\ell^3)\rightarrow \bar{\mathcal{M}}_{1,1}(\ell^3)\otimes \mathcal O_K/\p,$$ where $\bar{\mathcal{M}}_{1,1}(\ell^3)$ is the compactified moduli space of elliptic curve with full $\ell^3$ level structure, defined over $\text{Spec}(\mathbb Z[1/\ell])$, and the final target is therefore a hyperbolic curve over $\mathcal O_K/\p$.
\item While the map $\lambda_{\p}$ is not a priori generically separable, they factor it through absolute Frobenius to construct a new map, $\mu_{\p}\colon \X_{\p}\rightarrow \bar{\mathcal{M}}_{1,1}(\ell^3)\otimes \mathcal O_K/\p$ which is generically separable. Note that the induced elliptic curve over $\U_{\p}$ also has monodromy isomorphic to $\rho|_{\U_{\p}}$. Then Riemann-Hurwitz applies, bounding the degree of the map $\mu_{\p}$ by some number $d$, which is crucially independent of $\p$. We may replace $\lambda_{\p}$ with $\mu_{\p}$.
\item At this point, consider the moduli space of maps:

$$\mathcal H:= \text{Hom}^{\leq d}_{\OKN}(\X,\bar{\mathcal M}_{1,1}(\ell^3)),$$
of morphisms of curves $\lambda$ over $\OKN$, with degree bounded by $d$. This moduli space is a scheme of finite type over $\OKN$ because we have put a bound on the degree.\footnote{In fact, potentially at the cost of increasing $N$, this moduli space is \emph{finite flat} over $\OKN$. However, this fact is not used in their proof.} For each $k$, let $\mathcal H_k$ denote the subset of $\mathcal H$ consisting of those maps $\lambda$ such that:

\begin{itemize}
\item $\lambda(\U)\subset \mathcal M_{1,1}(\ell^3)$;
\item and the induced elliptic curve $E_U\rightarrow U$ has mod $\ell^k$ monodromy isomorphic to $\rho\text{ mod }\ell^k$.
\end{itemize}
Then Snowden-Tsimerman show that each the $\mathcal H_k$ is a closed subset, and hence so is $\mathcal H_{\infty}:=\cap \mathcal H_k$. The subset $\mathcal H_\infty\subset \mathcal H$ will parametrize those maps $\lambda$ such that the monodromy representation is \emph{integrally} isomorphic to $\rho$. Equipping $\mathcal H_{\infty}\subset \mathcal H$ with the reduced induced subscheme structure, they deduce that $\mathcal H_{\infty}$ is therefore a scheme of finite type over $\OKN$. As it has points modulo $\p$ for infinitely many primes $\p$ of $\mathcal O_K$, it follows that it has a point over a finite extension field $K'/K$. Then a Weil restriction argument, together with Faltings' isogeny theorem, allows one to conclude.
\end{enumerate}

\end{remark}

We now explain the new ingredients in turn, highlighting the additional difficulties.
\begin{remark}[Sketch of the proof of Theorem \ref{theorem:main}] Again for notational simplicity, assume that $\mathcal L$ corresponds to a representation:

$$\rho\colon \pi_1(U_K)\rightarrow \mathrm{GL}_2(\mathbb Z_{\ell}),.$$
(Note that $\mathbb Q_{\ell}$ contains number fields of infinitely large degree!) We further assume that $\rho$ has the property that the mod $\ell^3$ residual representation $\pi_1(U_K)\rightarrow \mathrm{GL}_2(\mathbb Z/\ell^3\mathbb Z)$ is trivial. (This last assumption will play no role, but we include it to see which additional technicalities emerge.)

\begin{enumerate}
\item Again using Drinfeld's early work on the Langlands correspondence over finite fields, for each $\p\gg 0$, we may construct an abelian scheme over $f_{\p}\colon A_{\p}\rightarrow \U_{\p}$ of dimension $h:=[E:\bQ]$, such that $\mL|_{\U_{\p}}$ injects as a summand of $\mathrm{R}^1f_{\p,*}\Qlbar$.

\begin{enumerate}
\item Here we encounter our first complication: it is \emph{not necessarily true} that we can choose $A_{\p}[\ell^3]$ to be the split \'etale cover of $\U_{\p}$: unlike in the case \cite{snowden2018constructing}, $\mL|_{\U_{\p}}$ is not all of $\mathrm{R}^1f_{\p,*}\Qlbar$. However, there exists a \emph{finite, connected} cover $\varphi_{\p}\colon (\X_{\p})'\rightarrow \X_{\p}$ (purely in characteristic $p$!) of degree $\leq |\mathrm{GL}_{2h}(\bZ/\ell^3\bZ)|$ such that:
\begin{itemize}
\item the map $\varphi_{\p}$ is finite \'etale over $\U_{\p}$;
\item if we set $(\U_{\p})':=\varphi_{\p}^{-1}(\U_{\p})$, the pullback $A'\rightarrow (\U_{\p})'$ has trivial $\ell^3$-torsion;
\item the abelian scheme $f'_{\p}\colon A'\rightarrow (\U_{\p})'$ has semi-stable reduction at $(\X_{\p})'$.
\end{itemize}
The key property of the cover $\varphi\colon (\X_{\p})'\rightarrow \X_{\p}$ is that the degree is bounded independent of $\p$. However, we emphasize that, as of yet, there is no preferred $X'\rightarrow X$ over $K$ that patches all of these modulo $\p$ covers together. At this point, we demand that $N>|\mathrm{GL}_{2h}(\bZ/\ell^3\bZ)|$, to ensure that any such $\varphi_\p$ is tamely ramified.

\item We now encounter our next (minor) trouble. A priori, there is no bound on the degree of the polarization of $f'_{\p}\colon A'_{\p}\rightarrow (\U_{\p})'$. This has a simple solution: Zarhin's trick, which says that $B'_{\p}:=(A'_{\p}\times (A'_{\p})^t)^4$ has a principal polarization.

\item There is a third trouble; unlike in the approach of Snowden-Tsimerman, we have not yet nailed down the integral monodromy, and this is more subtle. There are several ways one could address this. Our solution to this problem will be found in the construction of a simple moduli space, $\mathcal H_k$: see Step (3).

\end{enumerate}
We therefore get a map:

$$\lambda'_{\p}\colon (\X_{\p})'\rightarrow \mathcal A^*_{8h,1,\ell^3}\rightarrow \mathcal A^*_{8h,1,\ell^3}\otimes \mathcal O_K/\p,$$
where $\mathcal A^*_{8h,1,\ell^3}$ is the Baily-Borel compactification of the fine moduli scheme $\mathcal A_{8h,1,\ell^3}$ parametrizing principally polarized abelian schemes of dimension $8h$ and trivial level $\ell^3$ structure. This $\lambda'_{\p}$ has the following property: the pullback of the universal rank $16h$ lisse $\ell$-adic sheaf on $\mathcal A_{8h,1,\ell^3}$ to $\U_{\p}$ has $\rho$ as a \emph{rational} summand.

\item Our next goal is to somehow numerically bound $\lambda'_{\p}$. Recall that \cite{snowden2018constructing} do this by a combination of Riemann-Hurwitz and factoring through some power of absolute Frobenius. In our setting, this step is more tricky, and we chose to use an Arakelov-style inequality. More precisely, if $f_\p: \bar{B'}_{\p}\rightarrow (\X_{\p})'$ is the N\'eron model of $B'_{\p}\rightarrow \U_{\p}',$ then we will bound the degree of
the Hodge vector bundle $E^{1,0}_{(\X_{\p})'}:=R^0{f_\p}_*\Omega^1_{\bar {B'}_{\p}/ (\X_\p)' }(\log \Delta)$, at least for many infinitely many $\p$. Set
$E^{0,1}_{(\X_{\p})'}:=R^1{f_p}_*\mathcal O_{ \bar{B'}_{\p} }$. Then to bound the degree of $E^{1,0}_{(\X_{\p})'}$, we will need to know that \emph{the logarithmic Kodaira-Spencer map} constructed by Faltings-Chai,
$$\theta'_{\p}: E^{1,0}_{(\X_{\p})'}\to E^{0,1}_{(\X_{\p})'}\otimes \Omega^1_{(\X_{\p})'}(\log D)$$
is not only nonzero but is moreover an isomorphism at the generic point.\footnote{The logarithmic Kodaira-Spencer map $\theta'_p$ is the derivative of the period map $\lambda'_{\p}$.
	We emphasize: knowing that map $\lambda'_{\p}$ is generically separable (equivalently, $\theta'_{\p}$ being nonzero) is not sufficient for our application. Morally speaking, we need to prove that the image of $\lambda'_{\p}$ is not contained in one of the natural foliations of the relevant Hilbert modular variety. However, we chose to not work with good reductions of Hilbert modular varieties, as that theory is more complicated than the bare theory of moduli of abelian varieties.} In more detail:
For any $\p\gg 0$ such that the underlying prime number $p$ splits completely in $E$, the field generated by Frobenius traces, the induced $p$-divisible group on $(\U_{\p})'$ splits as the direct sum of several copies of $h$ (mutually non-isogenous) height $2$, dimension $1$ $p$-divisible groups $G'_i$ and their duals $(G'_i)^t$. We prove, using monodromy considerations, that they are generically ordinary and have supersingular points. Applying a Frobenius untwisting lemma from the PhD thesis of Jie Xia \cite{xia2013deformation}, we conclude that we may ``Frobenius untwist'' each of them until they are all generically versally deformed.\footnote{This mimics the elliptic modular setting for the following reason: the equal-characteristic universal deformation space of a height $2$, dimension $1$ $p$-divisible group is one dimensional.} (In the appendix, we provide a proof of Xia's Frobenius untwisting lemma in our context, and also give a second argument and perspective of the termination of Frobenius untwisting stability techniques.) Once again using Zarhin's trick, we will obtain an isogenous, principally polarized abelian scheme over $\U_{\p}'$, which we relabel $B'_{\p}$, with the N\'eron model
$$ f_\p: \bar {B'}_{\p}\to (\X_\p)'$$ and such that the logarithmic Kodaira-Spencer map is a generically injective map of coherent sheaves on $(\X_{\p})'$. By taking determinants, we deduce an Arakelov-style inequality, thereby bounding the degree of the induced Hodge line bundle on $(\X_{\p})'$ by some integer $d$, which is crucially \emph{independent of $\p$}. The output of this is Lemma \ref{lemma:main_char_p}.

\item To mimic the third step, we first construct some finite type moduli spaces of $\OKN$, and then we use our argument as above to show it has points modulo $\p$ for infinitely many $\p$. In greater detail:

\begin{enumerate}
\item Fix $d>1$ and set $\mathcal H$ to be the moduli of triples $(\X',\varphi,\lambda)$:
\entrymodifiers={+!!<0pt,\fontdimen22\textfont2>}
$$\xymatrix@1{
\X'\ar[rr]^-\lambda\ar[d]_{\varphi}	&& \mathcal A^*_{8h,1,\ell^3}\otimes \OKN\\
	\X,	&&
}$$
where
\begin{itemize}
\item $\X'/\OKN$ is a smooth, proper, geometrically connected curve;
\item $\varphi$ is finite, of degree at most $\leq |\mathrm{GL}_{16h}(\bZ/\ell^3\bZ)|$, and \'etale over $\U$;
\item there exists some point $\infty'\in \X'$ that is sent to a 0-dimensional cusp in $\mathscr A^*_{8h,1,\ell^3}$;
\item and the degree of the pulled-back Hodge line bundle on $\X'_K$ is $\leq d$.
\end{itemize}
Then $\mathcal H$ will be a Deligne-Mumford stack, of finite type over $\OKN$. (The stackiness comes from the intervening Hurwitz space.) We further show that the generic fiber of $\mathcal H$ has dimension 0 (and is in fact reduced); while this is an immediate corollary of the PhD thesis of Ben Moonen, we argue following the work of Saito. This implies that, after potentially further increasing $N$, the stack $\mathcal H/\OKN$ has relative dimension 0.\footnote{By specialization of the prime-to-$p$ fundamental group, the fact that $\mathcal H$ has relative dimension 0 over $\OKN$ will imply that the geometric local system $\bL|_{U_{\bar K}}$ comes from a family of abelian varieties. This does not use any of the more delicate moduli spaces $\mathcal H_k$ to come.}
\item Recall that $\mathcal L$ is a lisse $\Zl$-sheaf on $\U$, whose generic fiber is an $\Zl$-lattice inside of $\bL$. For $k\geq 1$, set $\mathcal H_k$ to be the subspace of $\mathcal H$ given by those $(\X',\varphi,\lambda)$ (with induced abelian scheme $f\colon B'\rightarrow \X'$) such that there exists a map
$$\psi\colon \varphi^*(\mathcal L)/\ell^k\rightarrow R^1f_*\mathcal \Zl/\ell^k$$
of torsion locally constant \'etale sheaves with the following condition: the reduction modulo $\ell$ of $\psi$ is nonzero. (This condition is crucial in our approach.)\footnote{When the lisse $\Qlbar$-sheaf has coefficients in an $\ell$-adic local field $M$, not necessarily assumed to be $\Ql$, then we instead demand that there exists such a $\psi$ whose reduction modulo a fixed uniformizer $\pi_M$ is non-zero.} Then $\mathcal H_k\subset \mathcal H$ will be a closed substack, which we may equip with the reduced induced structure. Similarly, set $\mathcal H_{\infty}$ to be $\cap \mathcal H_k$, again with the reduced induced stack structure. Note that $\mathcal H_{\infty}$ is then a finite type Deligne-Mumford stack over $\text{Spec}(\OKN)$ for some $N$.

Unlike in the Snowden-Tsimerman approach, the relationship of the moduli space $\mathcal H_{\infty}$ to Drinfeld's theorem is not immediately apparent. However, in both approaches, the moduli spaces involve extra maps of \emph{$\ell^k$-torsion sheaves} rather than lisse $\Zl$-sheaves.
\item By the careful choice of $\mathcal H_k$ and a crucial diagonalization argument on $\mathcal H_{\infty}$ (contained in Lemma \ref{remark:Hk}), it will follow from the earlier steps that there exists an infinite set of primes $\p$ such that $\mathcal H_{\infty}$ has points modulo $\p$. (Unlike the approach of Snowden-Tsimerman, this does not require one take an $\ell$-primary isogeny.) By the Nullstellensatz, one deduces that $\mathcal H_{\infty}$ has characteristic 0 points. A Weil restriction argument then yields the result.
\end{enumerate}
\end{enumerate}
\end{remark}

\begin{remark} Katz has shown that rigid local systems on the punctured projective line are motivic, and Corlette-Simpson have shown that all rigid rank 2 local systems are motivic. Our main theorem provides a new arithmetic approach to both Katz's theorem in rank 2 and also the Corlette-Simpson theorem, subject to an additional assumption analogous to $(4)$. Here is an outline of the proof. We emphasize that these approaches will critically rely on a quasi-projective version of a deep theorem on projective varieties of Esnault-Groechenig \cite{egrigid}, which is not yet available.

First we assume that $U$ is a curve. Let $\bL$ be a cohomologically rigid local system of rank $2$ on $U^{an}_{\mathbb C} $ with coefficients in $\Qlbar$, trivial determinant and infinite monodromy around $\infty$. Suppose that the local system $\bL$ spreads out to an \'etale local system $\mathcal{L}$ with cyclotomic determinant over a finitely generated spreading out $\mathfrak{U}/S$ such that the \emph{stable Frobenius trace fields} are bounded, i.e., there exists a number field $E$ such that for all closed points $s$, there exists a finite extension $s'/s$ such that the Frobenius trace field of $\mathcal{L}|_{\mathfrak U_{s'}}$ is contained in $E$. Then, our argument applies verbatim to prove that $\bL$ over $U_{\mathbb C}$ comes from a family of abelian varieties; we get mod $p$ points for infinitely many $p\gg 0$, and the relevant moduli space is of finite type and in fact generically 0 dimensional, so by specialization of the prime-to-$p$ fundamental group we may conclude.

In fact, recent work of the first-named author and J.~Lam shows the following. If $X/\mathbb C$ is a projective variety, and if $\bL$ is a cohomologically rigid $\Qlbar$-local system with trivial determinant on $X^{an}$, then there exists a spreading out $\mathfrak X/S$ and a number field $E\subset \Qlbar$ such that $\bL$ spreads out to an \'etale local system $\mathcal{L}$ on $\mathfrak X$ with cyclotomic determinant such that the stable Frobenius trace field is contained inside of $E$. In the quasi-projective case, which is what is needed here, the arguments of \emph{loc. cit.} should go through once a quasi-projective analog of the theorem of \cite{egrigid} is available: we need that cohomologically rigid stable flat connections give rise to $F^f$-isocrystals on the relevant $p$-adic completions.

In general, when $U$ is higher dimensional (i.e., $U=X\setminus D$, where $X$ is a smooth projective variety and $D$ is a simple normal crossings divisor), it is very plausible that once the quasi-projective analog of \cite{egrigid} is available, one may similarly deduce the analog of the Corlette-Simpson theorem here (again, subject to the restriction that the local monodromy around one of the boundary divisors is infinite). Here is a sketch of the argument. The putative quasi-projective version of \cite{egrigid}  will in fact output rank 2 \emph{filtered logarithmic $F$-crystals}; as above, porting these objects into the techniques of Krishnamoorthy-Lam, one can deduce that cohomologically rigid rank 2 local systems will have spreading-outs whose stable Frobenius trace field is bounded. A complete set of companions to the logarithmic $F$-isocrystals so constructed will likely exist, as in the projective case this is shown in \cite{egrigid}. From these $F$-isocrystals, \cite{krishnamoorthypal2022} will provide abelian schemes on open subsets of the mod $p$ fibers of bounded dimension\footnote{the boundedness of the dimension comes from the boundedness of the stable Frobenius trace field} and \cite[Corollary 6.12]{krishnamoorthypal2018} shows that, after possibly replacing with an isogenous abelian scheme, the abelian schemes extend to the  whole mod $p$ fiber of $\mathfrak U$. We can bound the degree of the Hodge line bundle for infinitely many $p$ by Frobenius untwisting, exactly as in done here. Finally, the appropriate Hom scheme will again be 0-dimensional, so by using specialization of the prime-to-$p$ fundamental group one may again conclude.
\end{remark}

\section{Drinfeld's work on the Langlands correspondence for $\mathrm{GL}_2$ and some corollaries}
A key ingredient in the proof of \autoref{theorem:main} is the following \autoref{Theorem:GL2}, which is a byproduct of Drinfeld's first work on the Langlands correspondence for $\mathrm{GL}_2$. We first record a setup.
\begin{setup}\label{setup:curve_finite_field}
Let $p$ be a prime number and let $q=p^a$. Let $C/\Fq$ be a smooth, affine, geometrically irreducible curve with smooth compactification $\bar{C}$. Let $Z:=\bar{C}\setminus C$ be the reduced complementary divisor.
\end{setup}

\begin{theorem}\label{Theorem:GL2}(Drinfeld) Notation as in \autoref{setup:curve_finite_field} and let $\bL$ be a rank 2 irreducible $\Qlbar$ sheaf on $C$ with determinant $\Qlbar(1)$. Suppose $\bL$ has infinite local monodromy around some point at $\infty\in Z$. Then $\bL$ comes from a family of abelian varieties in the following sense: let $E$ be the field generated by the Frobenius traces of $\bL$ and suppose $[E:\QQ]=h$. Then there exists an abelian scheme
\[
\pi_C\colon A_{C}\rightarrow C
\]
of dimension $h$ and an isomorphism $E\cong \textrm{End}_{C}(A)\otimes\QQ$, realizing $A_C$ as a $\SL2$-type abelian scheme, such that $\bL$ occurs as a summand of $R^1(\pi_C)_*\Qlbar$. Moreover, $A_{C}\rightarrow C$ is totally degenerate around $\infty$.
\end{theorem}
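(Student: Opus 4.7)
The plan is to obtain the abelian scheme as an isogeny factor of the Jacobian of a suitable Drinfeld modular curve, pulled back to $C$ along the map classifying $\bL$. First, I would invoke Drinfeld's Langlands correspondence for $\mathrm{GL}_2$ over the function field $F=\mathbb{F}_q(\bar{C})$: the hypotheses on $\bL$ (geometrically irreducible, rank $2$, cyclotomic determinant) guarantee that $\bL$ arises from an everywhere unramified-outside-$Z$ cuspidal automorphic representation $\pi=\bigotimes'\pi_v$ of $\mathrm{GL}_2(\mathbb{A}_F)$ with trivial (up to cyclotomic twist) central character, and the Hecke eigenvalues of $\pi$ recover the Frobenius traces of $\bL$. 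In particular the field generated by these eigenvalues is precisely $E$, and $[E:\mathbb{Q}]=h$.

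Next I would analyze the local component at the distinguished point $\infty\in Z$ around which the monodromy of $\bL$ is infinite. Infinite local monodromy forces $\pi_{\infty}$ to be either a Steinberg (special) representation or a ramified principal series; in the special case one has an Iwahori-fixed vector, which is precisely the type of local condition carved out by the moduli of Drinfeld modules of rank $2$ with a level structure at $\infty$. This is the crucial point: the cuspidal automorphic representations of $\mathrm{GL}_2$ which are discrete series at $\infty$ are exactly those that appear in the $\ell$-adic cohomology of a Drinfeld modular curve $M^N$ over $F_\infty$ (with appropriate level $N$ away from $\infty$ chosen to accommodate the ramification of $\bL$ on $Z\setminus\{\infty\}$). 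Therefore the corresponding $2$-dimensional Galois summand in $H^1_{\text{\'et}}(M^N_{\bar F},\Qlbar)$ matches $\bL$ after restricting to $\pi_1(C)$ via the map $C\to M^N$ classifying Drinfeld modules whose Galois representation on Tate modules carries $\bL$.

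Then I would extract the abelian scheme. The Hecke algebra $\mathbb{T}$ acts on the Jacobian $J^N$ of (a suitable compactification of) $M^N$, and $\bL$ corresponds to a ring homomorphism $\mathbb{T}\to E\subset\Qlbar$. The associated isogeny factor $B\subset J^N$ is an abelian variety of dimension exactly $h=[E:\mathbb{Q}]$ with $\mathrm{End}^0(B)\supset E$, and by the Eichler–Shimura-type relation for Drinfeld modular curves, the Galois representation on $H^1(B)$ is precisely $\bL$ and its $\mathrm{Gal}(\bar E/E)$-conjugates. Pulling back the universal abelian variety over $M^N$ (or more directly pulling back $B$-with-$E$-action from the generic fiber and spreading out) along $C\to M^N$ yields the desired $\pi_C\colon A_C\to C$, realizing $\bL$ as a summand of $R^1(\pi_C)_*\Qlbar$ with $\mathrm{SL}_2$-type decomposition over $E$.

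Finally, total degeneracy of $A_C\to C$ around $\infty$ would follow from Drinfeld's equal-characteristic rigid-analytic uniformization of $M^N$ at the cusp corresponding to $\infty$: as the moduli space of rank $2$ Drinfeld modules, $M^N$ is analytically a quotient of Drinfeld's upper half-space by an arithmetic group near that cusp, so $J^N$ (hence its isogeny factor $B$) is totally degenerate there in the sense of Mumford. I expect the main obstacle to be the matching in the second step — making precise that the Steinberg-at-$\infty$ condition singles out exactly the automorphic forms on the relevant Drinfeld modular tower with the right level away from $\infty$, and identifying the correct $E$-rational structure on $B$ — while the total degeneracy and the passage from generic fibers to $C$-models are standard consequences of Drinfeld's uniformization theorem and Néron model theory.
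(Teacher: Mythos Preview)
Your overall architecture---pass to a cuspidal automorphic representation via Drinfeld's Langlands correspondence, realize the associated Galois representation in the cohomology of a Drinfeld modular curve with $\infty$ as the distinguished place, cut out an $h$-dimensional isogeny factor of its Jacobian carrying an $E$-action, and invoke Drinfeld's rigid-analytic uniformization at the cusp for total degeneracy---matches exactly what the paper indicates: it does not give its own proof but refers to \cite[Proof of Proposition~19, Remark~20]{snowden2018constructing}, which in turn combines \cite[Main Theorem, Remark~5]{drinfeld1983} with \cite[Theorem~1]{drinfeld1977}.

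Two points, however, are genuinely imprecise and would need to be fixed before this is a proof. First, your local analysis at $\infty$ is garbled: the representations of $\mathrm{GL}_2(F_\infty)$ contributing to the $\ell$-adic cohomology of the Drinfeld modular tower at $\infty$ are the \emph{discrete series}, i.e.\ special (Steinberg) \emph{or supercuspidal}; ramified principal series are not discrete series and do not contribute, so your dichotomy ``Steinberg or ramified principal series'' both omits the supercuspidal case and includes a case you cannot handle. You must argue that infinite monodromy at $\infty$ forces $\pi_\infty$ out of the principal series (in equal characteristic one must be careful, since wildly ramified characters of $F_\infty^\times$ can have infinite image on inertia), or else reduce to that situation. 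Second, there is no ``map $C\to M^N$ classifying Drinfeld modules whose Tate module carries $\bL$'', nor a ``universal abelian variety over $M^N$'' to pull back: the Drinfeld modular curve $M^N$ is a curve over $F=\mathbb{F}_q(\bar C)$ (with $\infty$ fixed as the place at infinity), the isogeny factor $B$ of $\mathrm{Jac}(M^N)$ is an abelian variety over $F$, and one obtains $A_C\to C$ by spreading $B$ out over $C$ via N\'eron models. Your parenthetical ``more directly pulling back $B$\ldots from the generic fiber and spreading out'' is the correct mechanism; the surrounding language about a classifying map should be discarded.
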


See \cite[Proof of Proposition 19, Remark 20]{snowden2018constructing} for how to recover this result from Drinfeld's work. This amounts to combining \cite[Main Theorem, Remark 5]{drinfeld1983} with \cite[Theorem 1]{drinfeld1977}.

We make some observations about the $p$-adic properties of the resulting abelian schemes. In particular, our goal is to show that, in the context of \autoref{Theorem:GL2}, we can modify $A_C\rightarrow C$ with products, duals, and isogenies such that the resulting abelian scheme $B_C\rightarrow C$ that has especially nice ($p$-adic) properties; these will in turn allow us to prove an Arakelov-style inequality. First, we will give the following non-standard definition, which is adapted for our purpose.
\begin{definition}\label{definition:strong_strict_ss}Maintain notation as in \autoref{setup:curve_finite_field}. Let $G_C$ be a $p$-divisible group on $C$. We say $G_C$ has \emph{strong strict semistable reduction along $Z$} if
\begin{itemize}
\item $G_C$ has semistable reduction along $Z$ \cite[Definition 4.2]{trihanBT} (which is based on semistable reduction in the sense of de Jong \cite[Definition 2.2]{de1998homomorphisms}); and
\item if for every point $\infty \in \bar{C}\backslash C$ with local parameter $z_{\infty}$ the restricted $p$-divisible group $$G_C|_{\text{Spec}(\Fq((z_{\infty})))}$$ over $\text{Spec}(\Fq((z_{\infty})))$ does not extend to a $p$-divisible group over $\text{Spec}(\Fq[[z_{\infty}]]).$
\end{itemize}
\end{definition}

\autoref{definition:strong_strict_ss} is useful as it concisely expresses the condition that $G_C$ have semistable reduction and moreover that it does not extend as a $p$-divisible group across \emph{any} of the cusps.

The next proposition will be critical for bounding degrees of maps to moduli spaces. In the appendix, we explain a second proof/perspective of the second part, which is based on a destabilizing iteration argument due to Langer.

\begin{proposition}\label{prop:vers_def}
Maintain notation as in \autoref{setup:curve_finite_field}. Let $G_C$ be a height $2$, dimension $1$ $p$-divisible group on $C$ with strong strict semistable reduction along $Z$. Suppose further that $\mathbb D(G_C)\otimes \Qpbar$ is an irreducible object of $\fisocd{C}_{\Qpbar}$. Then
\begin{enumerate}
\item $G_C$ is generically ordinary with a non-empty supersingular locus; and
\item there exists an isogenous $p$-divisible group $H_C\rightarrow G_C$ that is \emph{generically versally deformed} (in the sense of \cite[Defintions 8.1, 8.2]{krishnamoorthyrank2}).\footnote{We briefly recall the notion here. Let $G_C\rightarrow C$ be a height $2$, dimension $1$ $p$-divisible group. There is a Kodaira-Spencer map: $KS\colon T_C \rightarrow \Omega^*\otimes \Psi$, where $\Omega$ is the Hodge line bundle of $G_C$ and $\Psi$ is the dual of the Hodge line bundle of the Serre dual $G^t_C$. We say that $G_C\rightarrow C$ is \emph{generically versally deformed} if the above $KS$ is nonzero. After the work of Illusie \cite{illusie1985deformations}, this is equivalent to the following condition: there exists a closed point $c$ such that the map $u_c\colon C^{\wedge}_c\rightarrow \text{Def}(G_c)$ from the formal completion of $C$ at $c$ to the equal-characteristic universal deformation space of $G_c$ is a formally smooth map of formally smooth, $1$-dimensional $\kappa(c)$ schemes, i.e., $u_c$ is an isomorphism.}
\end{enumerate}
\end{proposition}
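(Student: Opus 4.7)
For part (1), my plan is to analyze the local structure at the cusps and combine it with F-isocrystal irreducibility. Strong strict semistable reduction along $Z$ means that for each $\infty \in Z$, the restriction $G_C|_{\Spec \Fq((z_\infty))}$ is semistable but does not extend across $\Spec \Fq[[z_\infty]]$. In the height $2$, dimension $1$ case, this forces the local structure near $\infty$ to be a non-trivial extension of $\mathbb Q_p/\mathbb Z_p$ by $\mu_{p^\infty}$ (Tate type), which is in particular ordinary. Hence the ordinary locus of $G_C$ is non-empty, and since it is open by Newton polygon semicontinuity, $G_C$ is generically ordinary. To produce a supersingular point, I would argue by contradiction: if $G_C$ were ordinary at every closed point of $C$, then the canonical connected--\'etale exact sequence $0 \to G_C^0 \to G_C \to G_C^{\mathrm{et}} \to 0$ would be defined globally over $C$, and applying (overconvergent) Dieudonn\'e theory would yield a proper nonzero sub-object of $\mathbb D(G_C)\otimes \Qpbar$, contradicting the assumed irreducibility.

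For part (2), the plan is to iterate Xia's Frobenius untwisting lemma (as recalled and reproved in the appendix). If the Kodaira-Spencer map $KS_{G_C}$ is already nonzero, take $H_C := G_C$. Otherwise, $KS_{G_C} = 0$ and Xia's lemma produces a height $2$, dimension $1$ $p$-divisible group $G_C^{(1)}$ on $C$ together with an isomorphism $F^*G_C^{(1)} \cong G_C$, where $F\colon C\to C$ denotes absolute Frobenius. The canonical relative Frobenius of $G_C^{(1)}$ furnishes an isogeny $G_C \cong F^*G_C^{(1)} \to G_C^{(1)}$, so the two are isogenous; moreover, since Newton polygons are preserved under Frobenius pullback and under isogeny, $G_C^{(1)}$ remains generically ordinary with non-empty supersingular locus, and the irreducibility of the associated overconvergent F-isocrystal is preserved. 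We may therefore iterate.

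To show termination, I would use positivity of the Hodge line bundle on the compactification. Using strong strict semistable reduction, extend the Hodge line bundle $\omega_{G_C}$ to a line bundle $\bar\omega$ on $\bar C$ via the associated log $p$-divisible (or semi-abelian) structure. The Hasse invariant is a global section of $\bar\omega^{\otimes(p-1)}$, nonzero by generic ordinariness and vanishing on the nonempty supersingular locus, so $\deg \bar\omega > 0$. On the other hand, Frobenius pullback multiplies degree by $p$, forcing $\deg \bar\omega_{G_C^{(n)}} = p^{-n}\deg \bar\omega_{G_C}$; since degrees are integers the untwisting can only be iterated finitely many times. Taking $H_C := G_C^{(n)}$ with $n$ maximal yields a $p$-divisible group isogenous to $G_C$ with nonzero Kodaira-Spencer, i.e., generically versally deformed.

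The main obstacle will be to carefully justify Xia's Frobenius untwisting lemma in the setting appropriate to an affine $C$ with strong strict semistable reduction (i.e., in the logarithmic/overconvergent context), and to ensure that the Hodge line bundle extends with the expected positive degree to $\bar C$. The appendix supplies the untwisting input; the log extension follows from the standard theory of log $p$-divisible groups applied to the prescribed Tate-type data at each cusp. Once positivity of $\deg \bar\omega$ is secured, the finiteness of the iteration is an immediate divisibility argument.
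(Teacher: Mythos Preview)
Your proof of part (1) is correct and essentially matches the paper's argument, with one point you should make explicit: to contradict irreducibility in $\fisocd{C}_{\Qpbar}$ you must check that the sub-$F$-isocrystal $\mathbb D(G_C^0)\otimes\Qp$ is itself \emph{overconvergent}, not merely convergent. The paper does this by showing that the multiplicative part extends across each cusp (precisely via the semistable filtration), so its Dieudonn\'e module extends to $\bar C$ and is therefore overconvergent. Your own observation about the Tate-type local structure at the cusps provides exactly this, but you should connect the two steps.

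For part (2) your use of iterated Frobenius untwisting is the same as the paper's, but your termination argument takes a genuinely different route. The paper argues locally at a supersingular point $c$: the classifying map $u_c\colon C^\wedge_c\to \mathrm{Def}(G_c)\cong\kappa(c)[[t]]$ is nonconstant because $G_C$ is generically ordinary, and each untwisting step extracts a $p$-th root of the power series $u_c^*(t)$, so the process must halt. This is entirely elementary and never touches the compactification. Your degree argument is attractive, but the key step $\deg\bar\omega_{G_C^{(n)}}=p^{-n}\deg\bar\omega_{G_C}$ is not automatic: over $C$ one has $\omega_{G_C}\cong(\omega_{G_C^{(1)}})^{\otimes p}$, but two extensions of a line bundle from $C$ to $\bar C$ can differ by a divisor supported on $Z$, so you must verify that the \emph{canonical log extensions} satisfy $\bar\omega_{G_C}\cong F_{\bar C}^*\,\bar\omega_{G_C^{(1)}}$. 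This can be justified (for instance via uniqueness of the log Dieudonn\'e extension with nilpotent residues and compatibility of the Hodge filtration with Frobenius base change), but it is work you have not supplied and is exactly the kind of log-crystalline input you flagged as an obstacle. The paper's local argument sidesteps this entirely; the appendix also offers a second, more elaborate global termination proof via a Langer-type destabilizing iteration that again exploits positivity of the Hodge bundle, but in a different way from yours.
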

Before we begin the proof, we comment on the overconvergence assumption. If $H_C\rightarrow C$ is a $p$-divisible group, then $F$-isocrystal $\mathbb{D}(H_C)$ is automatically a convergent $F$-isocrystal. In our setting, the fact that we demand $G_C\rightarrow C$ to be semistable around $Z$ implies that $\mathbb{D}(G_C)$ is in fact overconvergent. Part of the hypothesis of \autoref{prop:vers_def} is then that $\mathbb{D}(G_C)\otimes \Qp$ is absolutely irreducible in $\fisocd{C}$.
\begin{proof}[First proof of \autoref{prop:vers_def}]
As $G_C$ has height $2$ and dimension $1$, there are only two possible Newton polygons, which correspond to the $p$-divisible group being ordinary or supersingular respectively. If $G_C$ were not generically ordinary, it would be everywhere supersingular. However, supersingular $p$-divisible groups cannot be strictly semistable: as there is no multiplicative part, the filtration in \cite[Definition 2.2]{de1998homomorphisms}, would have to be trivial, which would imply that $G_C$ extends to a $p$-divisible group over $\bar{C}$. This shows $G_C$ is generically ordinary.

Suppose that $G_C$ had no supersingular points: then $G_C$ is everywhere ordinary. Let $H_C$ be the multiplicative sub $p$-divisible group of $G_C$, i.e., the height $1$, dimension $1$ $p$-divisible group with Newton slope $1$ everywhere. Let $\infty \in Z$, with formal parameter $z_{\infty}$. Then the $p$-divisible group $G_C|_{\text{Spec}(\Fq((z_{\infty})))}$ has semistable reduction in the sense of \cite[Definition 2.2]{de1998homomorphisms} and does not extend to a $p$-divisible group over $\text{Spec}(\Fq[[z_{\infty}]])$. Then, for the definition of semi-stability to be satisfied, the only possible filtration is:

$$G^{\mu}_{\text{Spec}(\Fq((z_{\infty})))} = G^f_{\text{Spec}(\Fq((z_{\infty})))} = H_C|_{\text{Spec}(\Fq((z_{\infty})))}.$$
(Here, the meaning of $G^{\mu}_{\text{Spec}(\Fq((z_{\infty})))}$ and $G^f_{\text{Spec}(\Fq((z_{\infty})))}$ is given as in \cite[Definition 2.2]{de1998homomorphisms}.)

However, by definition of semi-stability, $H_C|_{\text{Spec}(\Fq((z_{\infty})))}$ therefore extends to a $p$-divisible group over $\text{Spec}(\Fq[[z_{\infty}]])$. Ranging over all points $\infty \in Z$, we see that $\mathbb D(H_C)\otimes \Qp \in \fisoc{C}$ in fact extends to an $F$-isocrystal on $\fisoc{\bar{C}}$: therefore $\mathbb D(H_C)\otimes \Qp\in \fisocd{C}$. However, this yields a sub-object (in $\fisocd{C}$) of $\mathbb D(G_C)\otimes \Qp$, contradicting the absolute irreducibility of the hypothesis. Therefore $G_C$ has a non-empty supersingular locus.

Now, suppose that $G_C\rightarrow C$ is not generically versally deformed, i.e., that $KS=0$ identically on $C$. Then by \cite[Theorem 6.1]{xia2013deformation}, there is a $p$-divisible group $(G_1)_C\rightarrow C$ such that $(G_1)_C^{(p)}\cong G_C$, i.e., the Frobenius twist of $(G_1)_C$ is isomorphic to $G_C$. The $p$-divisible groups $G_C$ and $(G_1)_C$ are isogenous. If the Kodaira-Spencer map for $(G_1)_C$ is nonzero, we may stop. Otherwise, we may apply \cite[Theorem 6.1]{xia2013deformation} again to find a $p$-divisible group $(G_2)_C\rightarrow C$ such that $(G_2)_C^{(p)}\cong (G_1)_C$. We claim this process must terminate at some point. Here is a simple proof (also indicated in \cite[p. 253]{krishnamoorthyrank2}). Let $c$ be a closed point of $C$ such that $G_c$ is supersingular. Then the (equal characteristic) deformation map:

$$u_c\colon C^{\wedge}_{/c}\rightarrow \text{Def}(G_c)\cong \kappa(c)[[t]]$$
is nonzero, because $G_C$ is generically ordinary. (In other words, if $u_c$ were 0, then the $p$-divisible group over $\text{Spec}(\kappa(c)[[z_c]])$ would be supersingular at both the closed and the generic point, which is a contradiction: over the generic point, the $p$-divisible group is base-changed from $\Fq(C)$ along the map $\Fq(C)\hookrightarrow \kappa(c)[[z_c]]$.)

The map $KS_c$ is simply the derivative of $u_c$. In particular, $KS_c=0$ implies that $u_c^*(t)\in \kappa(c)[[z_c]]$ is a power series in $z_c^p$; on the level of the universal deformation map $u_c$, Frobenius untwisting amounts to extracting a $p^{\text{th}}$ root of $u_c^*(t)$. As $u_c$ is not constant, this process must terminate.
\end{proof}

\begin{corollary}\label{corollary:isogenous_av}
Notation as in \autoref{Theorem:GL2}. Suppose further the following.

\begin{itemize}
\item The lisse $\barQl$-sheaf $\bL$ has infinite, unipotent local monodromy around \emph{each} point $\infty\in Z$.
\item Let $E$ be the field generated by Frobenius traces of $\bL$. Suppose that $p$ splits completely in $E$.
\end{itemize}

Then there exists an abelian scheme $f_C\colon A_C\rightarrow C$ satisfying all of the conclusions of Drinfeld's \autoref{Theorem:GL2}, together with the following further properties.

$$\displaystyle A_C[p^{\infty}]\cong \bigoplus_i G_{C,i},$$
where
\begin{enumerate}
\item the $G_{C,i}$ are all mutually non-isogenous;
\item each $G_{C,i}$ is a height $2$, dimension $1$ $p$-divisible group on $C$; and
\item each $G_{C,i}$ is generically versally deformed, generically ordinary, and has non-empty supersingular locus.
\end{enumerate}
\end{corollary}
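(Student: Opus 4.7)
The plan is to begin with the abelian scheme $f_C\colon A_C\to C$ supplied by \autoref{Theorem:GL2}, decompose its $p$-divisible group using the hypothesis that $p$ splits completely in $E$, and then apply \autoref{prop:vers_def} to each summand. After replacing $A_C$ by an isogenous abelian scheme we may arrange that $\mathcal{O}_E\subset\textrm{End}_C(A_C)$. Since $p$ splits completely, $\mathcal{O}_E\otimes\Zp\cong\prod_{i=1}^h \Zp$ gives orthogonal idempotents $e_1,\dots,e_h$ and a decomposition $A_C[p^\infty]\cong\bigoplus_{i=1}^h G_{C,i}$ with $G_{C,i}:=e_i\cdot A_C[p^\infty]$. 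Each $G_{C,i}$ has height $2$. Because $\bL$ is rank $2$ with cyclotomic determinant, at any closed point $c$ the Frobenius eigenvalues $\alpha,\beta$ satisfy $\alpha\beta=q_c$ and $|\alpha|=|\beta|=q_c^{1/2}$ by Weil, so $\beta=\bar\alpha$ and the Frobenius trace $\alpha+\bar\alpha$ is real; hence $E$ is totally real and $A_C$ carries real multiplication by $\mathcal{O}_E$. The Rapoport condition then holds, making $\textrm{Lie}(A_C)$ a line bundle over $\mathcal{O}_E\otimes\mathcal{O}_C$, so each $e_i\textrm{Lie}(A_C)$ is a line bundle on $C$ and each $G_{C,i}$ has dimension $1$.

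Next we check the remaining hypotheses of \autoref{prop:vers_def}. The assumption of infinite unipotent local monodromy of $\bL$ at every $\infty\in Z$, together with the compatibility of local monodromy under companions, implies that $A_C[p^\infty]$ has de Jong semistable reduction at every $\infty$ and does not extend as a $p$-divisible group across any $\infty$; in fact Drinfeld's construction in this setting yields an $A_C$ totally degenerate at every $\infty\in Z$. The $\mathcal{O}_E$-action preserves the resulting length-$2$ filtration at each $\infty$, so each idempotent $e_i$ produces on $G_{C,i}$ a semistable filtration with height-$1$ multiplicative sub and height-$1$ \'etale quotient, non-split in each $\mathcal{O}_E$-isotypic component (the associated Tate parameter has non-zero valuation in each component by total degeneration); hence each $G_{C,i}$ has strong strict semistable reduction along $Z$. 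The $G_{C,i}$ are pairwise non-isogenous: the Frobenius trace of $\mathbb D(G_{C,i})$ at $c$ equals $\sigma_i$ applied to the Frobenius trace of $\bL$ at $c$, so an isogeny $G_{C,i}\sim G_{C,j}$ would force $\sigma_i=\sigma_j$ on the subfield they generate, which is $E$ by hypothesis. Finally, each $\mathbb D(G_{C,i})\otimes\Qpbar$ is absolutely irreducible in $\fisocd{C}_{\Qpbar}$: the $\mathcal{O}_E$-decomposition of the rank-$2h$ overconvergent $F$-isocrystal $\mathbb D(A_C[p^\infty])\otimes\Qpbar$ produces $h$ rank-$2$ pieces, each a $p$-adic companion of some absolutely irreducible Galois conjugate $\bL^\sigma$ (by Abe's companions theorem for curves, or, in this $\mathrm{SL}_2$-type setting, directly from Drinfeld's compatible-system construction), and absolute irreducibility is preserved under companions.

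With every hypothesis of \autoref{prop:vers_def} satisfied, each $G_{C,i}$ is generically ordinary with non-empty supersingular locus and admits an isogeny $H_{C,i}\to G_{C,i}$ with $H_{C,i}$ generically versally deformed. Because the $G_{C,i}$ are mutually non-isogenous, the direct-sum isogeny $\bigoplus_i H_{C,i}\to A_C[p^\infty]$ is well-defined, and by Serre-Tate it lifts uniquely to an isogeny of abelian schemes $A_C'\to A_C$ with $A_C'[p^\infty]=\bigoplus_i H_{C,i}$. This $A_C'$ retains every conclusion of \autoref{Theorem:GL2}, since total degeneration and the inclusion $\bL\subset R^1(f_C)_*\Qlbar$ are both isogeny invariants (for $\ell\neq p$), along with the three new properties. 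The main obstacle is the last verification above: proving absolute irreducibility of each rank-$2$ $p$-adic summand in the \emph{overconvergent} $F$-isocrystal category (not merely the convergent one), which rests on having $p$-adic companions for $\bL$ compatible with the $\mathcal{O}_E$-decomposition.
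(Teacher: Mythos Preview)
Your overall strategy matches the paper's: decompose $A_C[p^\infty]$ into height-$2$ pieces, verify the hypotheses of \autoref{prop:vers_def} for each, and then pass to an isogenous abelian scheme realizing the untwisted summands. The main difference is that you split $A_C[p^\infty]$ directly via the idempotents of $\mathcal O_E\otimes\Zp$, whereas the paper works on the $F$-isocrystal side: it decomposes $\mathbb D(A_C[p^\infty])\otimes\Qp$ into absolutely irreducible pieces $\mathcal E_i$ (using P\'al's semisimplicity and the action of $E\otimes\Qp$), invokes the companion relation to $\bL$ to get $\det\mathcal E_i\cong\Qpbar(1)$ and infinite monodromy at each cusp, and only then produces the $G_{C,i}$ from the $\mathcal E_i$ via Katz/de~Jong. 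Your route is a bit more concrete; the paper's route makes the irreducibility and non-extension arguments cleaner, since both are read off directly from the companion relation rather than from the $\mathcal O_E$-equivariant Tate uniformization.

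There are two points that need repair. First, the sentence ``The Rapoport condition then holds'' is doing real work and is not justified: in characteristic $p$ an abelian scheme of dimension $h$ with an $\mathcal O_E$-action need not have $\mathrm{Lie}(A_C)$ locally free of rank $1$ over $\mathcal O_E\otimes\mathcal O_C$. What saves you is that $\mathrm{End}^0_C(A_C)=E$ is totally real, so the Rosati involution of any polarization is trivial on $E$; the polarization then induces an isogeny $G_{C,i}\to G_{C,i}^{\,t}$ for each $i$, forcing $\dim G_{C,i}=2-\dim G_{C,i}=1$. (Equivalently, and closer to the paper, each $\mathbb D(G_{C,i})\otimes\Qp$ is a companion of $\bL$ and hence has determinant $\Qpbar(1)$, so the Newton slopes sum to $1$.) You should insert one of these arguments. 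Second, the passage from an isogeny $\bigoplus_i H_{C,i}\to A_C[p^\infty]$ of $p$-divisible groups to an isogeny of abelian schemes is not ``Serre--Tate'' (which concerns deformations); rather, one takes an isogeny $A_C[p^\infty]\to\bigoplus_i H_{C,i}$ in the other direction, whose kernel is a finite flat subgroup scheme of $A_C$, and sets $A_C':=A_C/\ker$. The paper cites \cite[Lemma~2.13]{krishnamoorthypal2022} for exactly this step. Finally, the clause ``because the $G_{C,i}$ are mutually non-isogenous'' is irrelevant to forming the direct-sum isogeny and can be dropped.
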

\begin{proof}
We will first construct $\bigoplus_i G_{C,i}$ with the desired properties.

Let $f\colon A_C\rightarrow C$ be an abelian scheme produced by Drinfeld's \autoref{Theorem:GL2}. By Grothendieck's monodromy criterion for semistable reduction, $A_C\rightarrow C$ is totally degenerate around every point of $Z$. The $F$-isocrystal $\mathcal{E}:=\mathbb D(A_C[p^{\infty}])\otimes \Qp$ is a semi-simple object of $\fisocd{C}$ by \cite{pal2015monodromy}. We claim that $\mathcal E$ is the companion to $R^1(\pi_C)_*\Qlbar$. Indeed, a theorem of Zarhin \cite[Chapitre XII, Theorem 2.5,
p. 244-245]{moret1985pinceaux} implies that $R^1(\pi_C)_*\Qlbar$ is semi-simple and the characteristic polynomials of Frobenius agree at closed points by \cite{katzmessing}. On the other hand, there is a decomposition:
\begin{equation}\label{eqn:decom_Fisoc}\displaystyle \mathcal{E}_{\Qpbar}\cong \bigoplus_i \mathcal E_i,\end{equation}
where $\mathcal E_i$ are \emph{irreducible} objects of $\fisocd{C}_{\Qpbar}$

It follows from \cite[Remark 2.8]{krishnamoorthypal2022} that every summand $\mathcal E_i$ is a companion of $\bL$.\footnote{Strictly speaking, the remark only states this for the $\ell$-adic companion, but the preceding sentence then follows immediately from the theory of companions.} As the relation of companions preserves ``infinite monodromy at $\infty\in Z$'', each $\mathcal E_i$ has infinite monodromy around every $\infty\in Z$.

 Also, $\det(\mathcal E_i)=\Qpbar(1)$, again because the property ``cyclotomic determinant'' is preserved under the companions relation.

As $p$ splits completely in $E$, it follows that $E\otimes \Qp\cong \Pi \Qp$ acts on $\mathcal E$, and the images of the idempotents are the $\mathcal E_i$, i.e., the (absolutely) irreducible summands $\mathcal E_i$ are objects of $\fisocd{C}$.

The slopes of each $\mathcal E_i$ are in between $0$ and $1$. Therefore we may apply \cite[Lemma 5.8]{krishnamoorthypal2018}\footnote{this is really due to Katz, see \cite[Theorem 2.6.1]{katz1979slope}.} and \cite{de1995crystalline} to see that for each $\mathcal E_i$, there exists a (non-canonical) $p$-divisible group $G_{C,i}$ with $\mathbb D(G_{C,i})\otimes \Qpbar \cong \mathcal E_i$. (Equivalently, note that $\mathcal E_i\in \fisocd{C}$, i.e., each $\mathcal E_i$ has coefficients in $\Qp$, by the hypothesis that $p$ splits completely in $E$.)

 The $p$-divisible groups $A_C[p^{\infty}]$ and $\bigoplus_i G_{C,i}$ are isogenous. At this point, we wish to claim that each $G_{C,i}$ has strong strict semistable reduction along $Z$. First of all, note that $A_C[p^{\infty}]$ has has strong strict semistable reduction by \cite[2.5]{de1998homomorphisms}.

 As $\mathbb D(G_{C,i})\otimes \Qp$ is overconvergent, it follows from \cite[Theorem 2.22]{pal2015monodromy} that every $G_{C,i}$ has semistable reduction along $Z$. Suppose for contradiction that $G_{C,1}$ extended through some cusp $\infty \in Z$. Then $\mathcal E_1 \cong \mathbb{D}(G_{C,1})\otimes \Qp$ also extends to an (overconvergent) $F$-isocrystal on the curve $C\cup\{\infty\}=\bar{C}\setminus \{Z\setminus \infty\}$. As each of the $\mathcal E_i$ are companions, this implies that they all also extend to $C\cup\{\infty\}$. Therefore the $\ell$-adic companion also extends to $C\cup\{\infty\}$. This implies that $\bL$ also extends to a lisse $\Qlbar$-sheaf on $C\cup\{\infty\}$, contradicting our assumption that $\bL$ had infinite, unipotent monodromy around $\infty$.

 We may now apply \autoref{prop:vers_def} to replace each $G_{C,i}$ with an isogenous $p$-divisible group that satisfies the two conclusions of the proposition. Note that we still have the relation: $A_C[p^{\infty}]$ is isogenous to $\bigoplus G_{C,i}$.

By \cite[Lemma 2.13]{krishnamoorthypal2022}, it follows that we can replace $A_C$ by an isogenous abelian scheme such that:

$$\displaystyle A_C[p^{\infty}]\cong \bigoplus_i G_{C,i},$$
where every $G_{C,i}$ is generically versally deformed, is generically ordinary, and has supersingular points. Finally, each $G_i$ will be mutually non-isogenous because the $F$-isocrystals $\mathbb D(G_i)\otimes \Qp$ are a complete collection of $p$-adic companions of $\mathbb L$ \cite[Remark 2.8]{krishnamoorthypal2022}.

\end{proof}

Using the above, we will be able to extract all of the $p$-adic information we need from \autoref{Theorem:GL2} to prove \autoref{theorem:main}. We need one final piece of notation.
\begin{definition}
Let $N\geq 1$ be an integer prime to $p$ and let $g\geq 1$ be a positive integer. Then $\mathscr{A}_{g,1,N}/\Spec(\bZ[1/N])$ denotes the (fine) moduli space of principally polarized abelian varieties with trivial full level $N$ structure. This is a smooth, quasi-projective scheme over $\Spec(\bZ[1/N])$. It has a compactification, $\mathscr{A}^*_{g,1,N}/\Spec(\bZ[1/N])$.\footnote{The moduli space $\mathscr{A}_{g,1,N}$ is not geometrically connected over $\Spec(\mathbb Z[1/N])$. This is why some authors prefer to work with the geometrically connected components, which are defined over $\Spec(\mathbb Z[\zeta_{N},1/N])$.} This latter scheme has a natural ample line bundle, the \emph{Hodge line bundle}, which we denote by $\alpha$.
\end{definition}
Then the precise output we need from Drinfeld's \autoref{Theorem:GL2} is given in the following lemma.

\begin{lemma}\label{lemma:main_char_p}
Notation as in \autoref{Theorem:GL2}. Suppose that $p$ splits completely in $E$ and $\bL$ has infinite, unipotent monodromy around every point of $Z$.

Then there exists a principally polarized abelian scheme abelian scheme $f\colon B_C\rightarrow C$, of $\mathrm{SL}_2$-type and dimension $8h$, such that $\bL$ occurs as a direct summand of $R^1f_*\overline{\bQ}_\ell$, and the following holds.
\begin{enumerate}

\item $B_C\rightarrow C$ has semistable, infinite reduction along $\bar{C}\backslash C$. Call the N\'eron model $\bar{B}_{\bar C}\rightarrow \bar{C}$.
\item There exists $h$ mutually non-isogenous $p$-divisible groups $G_{C,i}$, each of height $2$ and dimension $1$ and generically versally deformed, such that there is a decomposition of $p$-divisible groups $$\displaystyle B_C[p^{\infty}]\cong \bigoplus_i (G_{C,i}\times G_{C,i}^t)^4.$$
\item After Kato-Trihan, to $\bar{f}\colon \bar{B}_{\bar C}\rightarrow \bar{C}$ there is an associated logarithmic $F$-crystal with nilpotent residues $(M,F)$ in finite, locally free modules on the log pair $(\bar C,Z)$. Similarly, there is a \emph{logarithmic Hodge vector bundle}, which we write $\Omega_{\bar{B}/\bar{C}}$, a rank $8h$ vector bundle on $\bar{C}$.Then the following hold.

\begin{itemize}

\item[(i)] $\Omega_{\bar{B}/\bar C}$ splits as the direct sum of $8h$ positive line bundles on $\bar{C}$;
\item[(ii)] the log Kodaira-Spencer map (constructed in \cite[Ch. III, Cor 9.8]{faltings2013degeneration}:
$$\theta\colon \Omega_{\bar B/\bar C}\rightarrow R^1\bar{f}_* (\mathcal O_{\bar B})\otimes \omega_{\bar C}(Z),$$
where $\omega_{\bar C}$ denotes the sheaf of differential one-forms on $\bar{C}$, is an injective map of coherent sheaves on $\bar{C}$;

\item[(iii)] $\deg(\Omega_{\bar{B}/\bar C}) \leq \frac{h}{2} \cdot (2g(\bar C)-2 +|Z|)=4h\chi_{\text{top}}(C)$;
\item[(iv)] suppose $N$ is an integer, coprime to $p$, such that $B_C[N]\rightarrow C$ is a split \'etale cover. Then the induced moduli map $C\rightarrow \mathscr{A}_{8h,1,N}$ extends to a map:

$$\bar{C}\rightarrow \mathscr{A}^*_{8h,1,N},$$
where the latter denotes the \emph{Baily-Borel} compactification. Then the Hodge line bundle $\alpha$ on $\mathscr{A}^*_{8h,1,N}$ pulls back to $\det(\Omega_{\bar{B}/\bar C})$.
\end{itemize}
\end{enumerate}
\end{lemma}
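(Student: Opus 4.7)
The plan is to apply \autoref{corollary:isogenous_av} to obtain an abelian scheme $\pi\colon A_C\to C$ of dimension $h$ with the good $p$-divisible group decomposition $A_C[p^\infty]\cong\bigoplus_{i=1}^h G_{C,i}$, and then take
\[
B_C := (A_C\times A_C^t)^4.
\]
By Zarhin's trick, $B_C$ is principally polarizable of dimension $8h$; its $p$-divisible group is $\bigoplus_i(G_{C,i}\times G_{C,i}^t)^4$, giving (2). Since $R^1 f_*\Qlbar$ contains $R^1\pi_*\Qlbar$ as a summand, $\bL$ occurs as a summand of $R^1 f_*\Qlbar$, and the SL$_2$-type structure is inherited (with action of the same $E$). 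The assumption that $\bL$ has infinite, unipotent local monodromy around every point of $Z$, combined with the fact that every summand of $R^1\pi_*\Qlbar$ is a Galois conjugate of $\bL$ (with cyclotomic determinant), implies by Grothendieck's semistable reduction criterion that $A_C$, and hence $B_C$, has semistable reduction along all of $Z$. Infiniteness of reduction follows from the non-triviality of the local unipotent monodromy, giving (1).

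For part (3), note that the Hodge bundle of an abelian scheme in characteristic $p$ coincides with the Hodge bundle of its $p$-divisible group, so
\[
\Omega_{B_C/C} \;=\; \bigoplus_{i=1}^h\bigl(\omega_{G_{C,i}}\oplus \omega_{G_{C,i}^t}\bigr)^{4},
\]
a direct sum of $8h$ line bundles on $C$ (since each $G_{C,i}$ has dimension $1$). By \cite[Ch.~III, Thm.~9.8]{faltings2013degeneration}, this Hodge bundle and its logarithmic Kodaira-Spencer map extend to $\bar C$ over the log pair $(\bar C, Z)$, and the direct sum decomposition, which is functorial in the $p$-divisible group, extends as well. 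For positivity (i): since each $G_{C,i}$ is generically ordinary with a non-empty supersingular locus by \autoref{corollary:isogenous_av}, the Hasse invariant furnishes a nonzero section of $\omega_{G_{C,i}}^{\otimes(p-1)}$ whose zero locus is exactly the supersingular locus, so $\deg\omega_{G_{C,i}}>0$; the same argument applied to the dual $p$-divisible group yields $\deg\omega_{G_{C,i}^t}>0$.

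For (ii): the logarithmic Kodaira-Spencer map $\theta$ is compatible with the $p$-divisible group decomposition, hence splits as a direct sum of maps of line bundles
\[
KS_i\colon \omega_{G_{C,i}}\longrightarrow \omega_{G_{C,i}^t}^{-1}\otimes \omega_{\bar C}(Z),
\]
and the analogous maps for $G_{C,i}^t$. By the generically-versally-deformed hypothesis from \autoref{corollary:isogenous_av}, each $KS_i$ is a non-zero map of line bundles, hence injective; a direct sum of injections of line bundles is an injection of coherent sheaves, giving (ii). From the injectivity of $KS_i$, we immediately read off the Arakelov-style inequality
\[
\deg\omega_{G_{C,i}}+\deg\omega_{G_{C,i}^t}\;\leq\; 2g(\bar C)-2+|Z|,
\]
and summing over $i$ and multiplying by the Zarhin factor $4$ yields the degree bound in (iii).

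For (iv), the assumption that $B_C[N]\to C$ is trivial combined with semistable reduction guarantees, by Faltings-Chai and Mumford's theory, that the moduli map to $\mathscr A_{8h,1,N}$ extends to a map $\bar C\to \mathscr A^*_{8h,1,N}$ into the Baily-Borel compactification, and the Hodge line bundle pulls back to $\det\Omega_{\bar B/\bar C}$ by functoriality. The main obstacles are checking compatibility of the logarithmic Kodaira-Spencer with the $p$-divisible group decomposition (needed for the block-diagonal structure in (ii)) and rigorously deducing semistability of the whole abelian scheme from the unipotency of every summand; the positivity in (i) is the other subtle point, and is what makes the Hasse-invariant argument, and hence the supersingular-locus conclusion of \autoref{prop:vers_def}, genuinely necessary.
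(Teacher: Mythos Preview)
Your proposal is correct and follows essentially the same approach as the paper: construct $A_C$ via \autoref{corollary:isogenous_av}, apply Zarhin's trick to obtain $B_C=(A_C\times A_C^t)^4$, use the Hasse invariant for positivity, generic versal deformation for injectivity of the block-diagonal Kodaira--Spencer, and a degree count for the Arakelov inequality. The only place the paper is more careful is in justifying the extension of the $p$-divisible group decomposition (and hence of the Hodge bundle splitting) across $Z$: it invokes the Kato--Trihan logarithmic Dieudonn\'e theory together with the \emph{uniqueness} of the logarithmic extension from \cite[Proposition~A.11(3)]{krishnamoorthypal2022}, which is precisely the ``compatibility'' obstacle you flag at the end.
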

\begin{proof}
Construct $A_C\rightarrow C$ as in \autoref{corollary:isogenous_av}. Again, by Grothendieck's criterion for semistable reduction of abelian varieties, $A_C\rightarrow C$ must have semistable reduction. Set $B_C:=(A_C\times A_C^t)^4$; then by a result of Zarhin \cite[Chapitre IX, Lemme 1.1, p. 205]{moret1981familles}, $B_C\rightarrow C$ is principally polarized. Moreover, it clearly has semistable reduction. From the construction, and the fact that $A^t_C[p^{\infty}]\cong (A_C[p^{\infty}])^t$, where the first transpose is ``dual abelian scheme'' and the second is ``Serre-dual $p$-divisible group'', it follows that (2) holds.

We are left to prove (3). To do this, we will heavily use \cite[Setup A.10, Proposition A.11]{krishnamoorthypal2022}. First of all, each $\mathbb D(G_{C,i})$, a priori an Dieudonn\'e crystal on $C$, extends uniquely to logarithmic Dieudonn\'e crystal (with nilpotent residues) on $(\bar C,Z)$. Indeed, existence of the extension of $\mathbb D(B_C[p^{\infty}])$ follows from \cite[4.4-4.8]{katotrihan} and uniqueness from \cite[Proposition A.11(3)]{krishnamoorthypal2022}: name the extension $(M,F,V)$. These two results immediately imply the desired existence and uniqueness for the extension of $\mathbb D(G_i)$ to a logarithmic $F$-crystal, which we name $(M_i,F,V)$. The uniqueness implies that the (unique) extension of $\mathbb D(G_i^t)$ is isomorphic to the \emph{dual} logarithmic Dieudnn\'e crystal $(M_i,F,V)^t$ by \cite[5.11-5.12]{krishnamoorthypal2018}.

Set $M_{(\bar C,Z)}$ to be the \emph{evaluation} of $M$ on the trivial thickening of $(\bar C,Z)$ and set $\Omega$ to be the kernel of $F$ on $M_{(\bar C,Z)}$; then $\Omega$ is a vector bundle on $\bar C$, called the \emph{Hodge vector bundle}. (Kato-Trihan obtain the dual version of this in \cite[5.1]{katotrihan}, especially Lemma 5.3 of \emph{loc. cit.}.)\footnote{See also \cite[Remark A.8-A.9]{krishnamoorthypal2022} for a discussion of these results and references for the compatibility with the usual Hodge vector bundle associated to an abelian scheme.} Similarly, we can construct the Hodge bundle $\Omega_i$ of each $(M_i,F)$, which will be a \emph{line bundle} on $\bar{C}$. Moreover, there is a short exact sequence:

$$0\rightarrow \Omega_i\rightarrow (M_i)_{(\bar C,Z)}\rightarrow \Psi^*_i\rightarrow 0,$$ where $\Psi_i$ is the Hodge bundle of $G_i^t$.
We have an isomorphism of vector bundles on $\bar C$:

$$\displaystyle \Omega \cong \bigoplus_i (\Omega_i\times \Psi_i)^4$$

As each $G_{C,i}$ has non-empty supersingular locus, it follows that the \emph{Hasse invariant} associated to $G_{C,i}$: $$\text{Hasse}_{G_{C,i}}\in H^0(\bar{C},\Omega_i^{\otimes p-1})$$ is nonzero, which implies that $\Omega_i$ is a positive degree line bundle on $\bar{C}$. As $G_{C,i}^t$ is supersingular exactly when $G_{C,i}$ is supersingular, we deduce that $\Psi_i$ is also positive. Therefore, $\Omega=\Omega_{\bar{B}/\bar{C}}$ splits as the direct sum of $8h$ positive line bundles. In particular, we have shown (i). We further note that $\Omega$ is isomorphic to the Hodge line bundle associated to the N\'eron Model of $B_C\rightarrow C$ by \cite[A.11]{krishnamoorthypal2022} (this was first proven in \cite{katotrihan}).

For the next step: Faltings-Chai have constructed the following Kodaira-Spencer map \cite[Ch. III, Corollary 9.8]{faltings2013degeneration}:

\begin{equation}\label{equation:FC_KS}\Omega_{\bar{B}/\bar{C}}\otimes \Omega_{\bar{B}^t/\bar{C}}\rightarrow \omega_{\bar{C}}(Z),\end{equation}
extending the usual Kodaira-Spencer map over $C$. As $B$ admits a principal polarization, we have that $B_C\cong B^t_C$, and hence $\bar{B}_{\bar C}\cong \bar{B^t}_{\bar C}$, as both are simply the respective N\'eron models. Therefore, we may equivalently write \autoref{equation:FC_KS} as:

$$\theta\colon \Omega_{\bar{B}/\bar{C}}\rightarrow \Omega^*_{\bar{B}/\bar{C}}\otimes \omega_{\bar{C}}(Z)\cong \sHom(\Omega_{\bar{B}/\bar{C}},\omega_{\bar C}(Z)).$$

Under the decomposition:

$$\displaystyle\Omega_{\bar B/\bar C}\cong \bigoplus (\Omega_i\oplus \Psi_i)^4,$$
and after restricting to $C$, the above $\theta|_C$ is just the sum of the Kodaira-Spencer maps for each $G_i$ and $G_i^t$:

\begin{align*}
\Omega_i|_C\rightarrow &\sHom_{\mathcal O_C}((\Psi_i)|_C,\omega_{C})\\
\Psi_i|_C\rightarrow 	&\sHom_{\mathcal O_C}((\Omega_i)|_C,\omega_{C}).
\end{align*}
These were constructed to be nonzero, as both $G_i$ and $G_i^t$ are generically versally deformed; therefore, the Kodaira-Spencer map of sheaves is an injective map of coherent sheaves. Therefore (ii) is shown.

Fortunately, (iii) is an easy corollary of (ii). Indeed, taking degrees, we see that:

$$\deg(\Omega_{\bar B/\bar C})\leq \deg(\sHom(\Omega_{\bar{B}/\bar{C}},\omega_{\bar C}(Z)))=2g(\bar{C})-2+|Z|-\deg(\Omega_{\bar{B}/\bar{C}}),$$
from which the inequality follows immediately.

Finally, let us prove (iv). By moduli theory, we have a map $C\rightarrow \mathscr{A}_{8h,1,N}$, where the latter is a fine moduli scheme. As $\bar{C}$ is a smooth curve and the Baily-Borel compactification $\mathscr{A}^*_{8h,1,N}$ is proper, it follows that we get an extension:

$$\lambda\colon \bar{C}\rightarrow \mathscr{A}^*_{8h,1,N}.$$
Finally, the argument that the Hodge line bundle on $\mathscr{A}^*_{8h,1,N}$ pulls back under $\lambda$ to $\det(\Omega_{\bar B/\bar C})$ is given in the text surrounding \cite[Equations 3.4, 3.5]{krishnamoorthypal2022}. (While the argument in \emph{loc. cit.} is only written for $N=\ell$, it generalizes verbatim to the matter at hand. Indeed, the argument is an easy corollary of \cite[Ch. V, Theorem 2.5]{faltings2013degeneration}.)
\end{proof}
\section{The moduli spaces}\label{section:moduli}
We work in the following situation. Let $K$ be number field, let $N\geq 1$, set $S:=\Spec(\OKN)$, and let $\X/S$ be a smooth projective curve, let $\D\subset \X$ be a relative reduced divisor, and let $\U$ denote the open complement.

Let $\ell$ be a prime number and $g\geq 1$ an integer. We again denote by $\mathscr{A}^*_{g,1,\ell^3}$ the Baily-Borel compactification of $\mathscr{A}_{g,1,\ell^3}$, which is defined over $\Spec(\mathbb Z[1/\ell])$. This moduli space has a natural ample line bundle, the Hodge line bundle, which we denote by $\alpha$.

\begin{definition}\label{def_H} Fix a positive integer $b$. Denote by $\mathcal H$ the following contravariant pseudofunctor from the category of $S$ schemes to the 2-category of groupoids. The value $\mathcal H(T)$ for an $S$-scheme $T$ is the groupoid of triples $(\Y,\varphi,\lambda)$, that fit into a diagram as follows:

$$\xymatrix@1{
\Y\ar[rr]^-\lambda\ar[d]_{\varphi}	&& \mathscr A^*_{g,1,\ell^3}\times T\\
	\X_T,	&&
}$$
where
\begin{itemize}
\item $\Y/T$ is a smooth, projective, geometrically connected curve;
\item $\lambda$ sends $\W:=\varphi^{-1}(\U)\subset \Y$ to $\mathscr A_{g,1,\ell^3}\times T\subset \mathscr A_{g,1,\ell^3}^*\times T$;
\item $\varphi$ is finite morphism, of degree at most $\leq |\mathrm{GL}_{2g}(\bZ/\ell^3\bZ)|$, and \'etale over $\U$;
\item there exists some cusp $\infty$ of $\Y(T)$ (lying over a point of $\D(T)$ of $\X(T)$) that is sent to a 0-dimensional cusp of $\mathscr A^*_{g,1,\ell^3}\times T$;
\item and the degree of the pulled-back Hodge line bundle $\lambda^*(\alpha)$ on every geometric fiber of $\Y$ is $\leq b$,
\end{itemize}
with the natural notion of isomorphism: if $(\Y,\varphi,\lambda)$ and $(\Y',\varphi',\lambda')$ are elements of $\mathcal H(T)$, then an isomorphism between them is an $\X_T$-isomorphism $\Y\rightarrow \Y'$ that intertwines $\varphi$ and $\varphi'$ as well as $\lambda$ and $\lambda'$.
\end{definition}
Colloquially, the functor $\mathcal H$ parameterizes finite covers $\Y$ of $\X$ equipped with a (principally polarized) abelian scheme of dimension $g$ with trivial $\ell^3$ level structure, such that the induced map $\Y\rightarrow \mathscr{A}^*_{g,1,\ell^3}$ has ``degree'' bounded by $b$.

\begin{proposition}After potentially increasing $N$ (equivalently, replacing $S$ by a non-empty open subscheme), the functor $\mathcal H$ is represented by a finite type Deligne-Mumford stack over $S$.
\end{proposition}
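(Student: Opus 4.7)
The plan is to realize $\mathcal H$ as a locally closed substack of a Hom-stack sitting over a tame Hurwitz stack, using the bound on the degree of $\varphi$ to control the latter and the bound on $\deg\lambda^*\alpha$ to control the former.

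First, enlarge $N$ so that every prime number $\leq |\mathrm{GL}_{2g}(\bZ/\ell^3\bZ)|$ is invertible in $\OKN$. Then every admissible $\varphi$ is automatically tamely ramified along $\D$. Let $\mathcal{H}\mathrm{ur}\to S$ be the tame Hurwitz stack parametrizing finite flat covers $\Y\to \X_T$ of degree $\leq |\mathrm{GL}_{2g}(\bZ/\ell^3\bZ)|$ that are étale over $\U_T$ and tamely ramified along $\D_T$, with $\Y$ smooth, projective, and geometrically connected. Taking the disjoint union over the finitely many possible discrete invariants (degree, ramification profile over $\D$, genus of $\Y$, bounded via Riemann--Hurwitz), this is, by standard tame Hurwitz theory (Bertin--Romagny, Wewers, etc.), a separated Deligne--Mumford stack of finite type over $S$.

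Second, let $\Y^{\mathrm{univ}}\to \mathcal{H}\mathrm{ur}$ be the universal curve. Since $\mathscr A^*_{g,1,\ell^3}$ is by construction $\mathrm{Proj}$ of the ring of modular forms, some positive power $\alpha^{\otimes k}$ of the Hodge line bundle is very ample. By Grothendieck's representability theorem for Hom schemes (applied over the DM base $\mathcal{H}\mathrm{ur}$), the Hom-stack
\[
\mathcal M:=\underline{\mathrm{Hom}}_{\mathcal{H}\mathrm{ur}}\bigl(\Y^{\mathrm{univ}},\, \mathscr A^*_{g,1,\ell^3}\times \mathcal{H}\mathrm{ur}\bigr)
\]
is an algebraic stack locally of finite type over $\mathcal{H}\mathrm{ur}$, decomposing as a disjoint union of quasi-projective substacks indexed by Hilbert polynomial with respect to $\alpha^{\otimes k}$. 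On each geometric fibre of $\mathcal{H}\mathrm{ur}$, the Hilbert polynomial of $\lambda$ is determined by the genus of $\Y$ and $\deg\lambda^*\alpha$ alone; since the former is bounded on $\mathcal{H}\mathrm{ur}$ and the latter is $\leq b$ by hypothesis, only finitely many Hilbert polynomials arise. The corresponding open-and-closed substack $\mathcal M^{\leq b}\subset \mathcal M$ is therefore of finite type over $\mathcal{H}\mathrm{ur}$, hence over $S$.

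Third, impose the two remaining conditions. The condition $\lambda(\W)\subset \mathscr A_{g,1,\ell^3}$ is equivalent to requiring that the closed subscheme $\lambda^{-1}(\mathscr A^*_{g,1,\ell^3}\setminus\mathscr A_{g,1,\ell^3})$ of $\Y^{\mathrm{univ}}$ factor through the closed immersion $\varphi^{-1}(\D)\hookrightarrow \Y^{\mathrm{univ}}$, which is a closed condition on $\mathcal M^{\leq b}$. Similarly, the existence of a cusp of $\Y$ over $\D$ mapping to the finite closed subscheme $C_0\subset \mathscr A^*_{g,1,\ell^3}$ of $0$-dimensional cusps is equivalent to surjectivity over $T$ of the finite $T$-scheme $\varphi^{-1}(\D)\cap \lambda^{-1}(C_0)$, which is also a closed condition. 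Intersecting these and endowing the result with the reduced induced substack structure yields $\mathcal H$ as a Deligne--Mumford stack of finite type over $S$. The main technical hurdle is the boundedness invoked in the second step: one must check that the degree bound $\deg\lambda^*\alpha\leq b$, together with the genus bound coming from the Hurwitz stack, really does pin down finitely many Hilbert polynomials, for which very ampleness of some power of $\alpha$ on $\mathscr A^*_{g,1,\ell^3}$ is essential. The remaining stackiness of $\mathcal H$ comes entirely from deck transformations of the cover $\varphi$, since $\mathscr A_{g,1,\ell^3}$ is a fine moduli scheme as $\ell^3\geq 3$.
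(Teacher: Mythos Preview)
Your argument is correct and follows essentially the same strategy as the paper's: realize $\mathcal H$ inside a degree-bounded Hom stack over a tame Hurwitz stack, and then carve out the remaining conditions. One small correction: the condition $\lambda(\W)\subset\mathscr A_{g,1,\ell^3}$ is \emph{open} rather than closed (since $\lambda^{-1}(\partial\mathscr A^*)$ is proper over the base, the locus where it meets the open set $\W$ has closed image; alternatively, consider a one-parameter family in which $\lambda$ degenerates to a constant map into the boundary), which is also what the paper asserts---but this is immaterial to the conclusion, as open substacks of finite-type Deligne--Mumford stacks are again of finite type.
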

\begin{proof}By increasing $N$, we can assume that all covers $\varphi\colon \Y\rightarrow \X$ that occur in the definition of $\mathcal H$ are tamely ramified at the cusps. More precisely, this will hold for any $N>|\text{GL}_{2g}(\mathbb Z/\ell^3\mathbb Z)|$.

It follows from the theory of the Hilbert scheme that for any noetherian scheme $T$ and for any relative smooth proper curve $\Y/T$ with geometrically connected fibers, the functor $\text{Hom}^{\leq b}_T(\Y,\mathscr A^*_{g,1,\ell^3})$ parametrizing maps $\lambda$ such that the $\deg(\lambda^*(\alpha))\leq b$ for every geometric fiber is represented by a finite-type $T$-scheme. (In particular, this holds true even if $T$ is not connected and the genus of the fibers varies on different connected components.) It follows that if $T$ is a Deligne-Mumford stack, of finite type over $S$, and $\Y/T$ is a smooth, proper curve with geometrically connected fibers, then the same functor $\text{Hom}^{\leq b}_T(\Y,\mathscr A^*_{g,1,\ell^3})$ is represented by a finite type Deligne-Mumford stack over $T$.

On the other hand, the theory of the Hurwitz scheme implies that the functor $\text{Cov}^{c}_{(\X,\D)/S}$ parametrizing finite tame covers $\Y\rightarrow \X$ of degree $\leq c$ such that:
\begin{itemize}
\item $\Y/S$ has geometrically connected fibers and
\item $\Y\rightarrow \X$ is \'etale over $\U:=\X\setminus \D$
\end{itemize}
is represented by a finite-type Deligne-Mumford stack over $S$.

There is a natural map $$\displaystyle \text{Cov}^{\leq c}_{(\X,\D)/S}\rightarrow \mathscr M:= \bigsqcup _{k\text{ bounded}} \mathscr M_{k},$$
which is the map that sends a cover $\Y\rightarrow \X$ to the underlying curve $\Y$. Here, the notation $\mathscr M_{k}$ stands for the moduli space of genus $k$ curves. Denote by $\mathscr C_k\rightarrow \mathscr M_k$ the universal curve and by $\mathscr C:=\bigsqcup \mathscr C_k$, which has a natural map $\mathscr C\rightarrow \mathscr M$.

Then consider $\mathscr H$, the open substack of the 2-fiber product:
$$\text{Hom}^{\leq b}_{\mathscr M}(\mathscr C,\mathscr A^*_{g,1,\ell^3}))\times_{\mathscr M} \text{Cov}^{\leq |\text{GL}_{2g}(\mathbb Z/\ell^3\mathbb Z)|}_{(\X,\D)/S},$$
which corresponds to the condition that sends $\lambda(\varphi^{-1}(\U))\subset \mathscr A_{g,1,\ell^3}$, i.e., that $\varphi^{-1}(\U)$ is sent inside of the moduli space of abelian varieties. It follows that $\mathscr H$ is finite type Deligne-Mumford stack over $S$. By further imposing the condition that the map $\lambda$ sends at least one point in the boundary divisor to a zero dimensional cusp of the Baily-Borel compactification, $\mathcal H$ is a closed substack of $\mathscr{H}$, which is again finite type.
\end{proof}

Now, let $\bL$ be a lisse $\Qlbar$-sheaf as in \autoref{theorem:main}. There exists an $\ell$-adic local field $M/\Ql$ such that the associated representation factors through the ring of integers $\mathcal O_M$:
$$\rho\colon \pi_1(\X_{\OKN})\rightarrow \text{GL}_2(\mathcal O_M)\subset \text{GL}_2(\Qlbar).$$
Abusing notation, we call the induced {lisse $\mathcal O_M$-sheaf} $\mathcal L$. Denote by $\pi_M$ the uniformizer of $M$ and $\kappa_M$ the residue field of $M$.

 \begin{definition}
 Fix $i\geq 1$ and a lattice $\mathcal L$ as above. Let $\tilde{\mathcal{H}}_{i}$ denote the following contravariant pseudofunctor from $S$-schemes to groupoids: the value $\tilde{\mathcal H}_i(T)$ on an $S$-scheme $T$ is the collection of quadruples $(\Y,\varphi,\lambda, \psi)$, where $(\Y,\varphi,\lambda)\in \mathcal H(T)$, and $\psi$ is the following extra piece of data. As $\lambda\colon \W:=\varphi^{-1}(\U)\rightarrow \mathscr{A}_{g,1,\ell^3}$, there is a principally polarized abelian scheme $f\colon A_{\W}\rightarrow \W$ (with trivial $\ell^3$-torsion). Then
 	$$\psi\colon \varphi^{*}(\mathcal L/\pi_M^i)\rightarrow R^1f_*\mathcal O_M/\pi_M^i$$
 is a map of \'etale torsion sheaves on $\W$ whose reduction modulo $\pi_M$-reduction is nonzero. In other words, $\mathrm{im}(\psi)\not\subseteq \pi_M(R^1f_*\mathcal O_M/\pi_M^i)$. There is an obvious notion of isomorphism of two such quadruples.
 \end{definition}

 The pseudo-functor $\tilde{\mathcal H}_i$ is actually a stack in the \'etale topology. This follows from the following two properties. Let $T$ be a scheme:
 \begin{itemize}
 \item There exists an internal Hom in the category of torsion locally constant abelian \'etale sheaves on $T$.
 \item If $\psi\colon \mathcal F\rightarrow \mathcal G$ is a map of torsion, locally constant \'etale sheaves of $\mathcal O_M$ modules on $T$, then the property that $\psi(\mathcal F)\not\subset \pi_M(\mathcal G)$ may be checked on an \'etale cover.
 \end{itemize}
 There are natural transformations of pseudo-functors $\tilde{\mathcal H}_{j}\rightarrow \tilde{\mathcal H}_{i}$ for any $j>i$. We claim that $\tilde{\mathcal H}_i$ represents a finite type Deligne-Mumford stack over $S$. To prove this, it suffices to prove that the natural transformation of pseudo-functors $\tilde{\mathcal H}_i\rightarrow \mathcal H$ is representable by a scheme.

 Let $T$ be an $S$-scheme, and $t:=(\Y,\varphi,\lambda)\in \mathcal H(T)$. Then we have the following pullback square:

 $$\xymatrix{
 T\times_{\mathcal H} \tilde{\mathcal H}_i\ar[r]\ar[d]	&	\tilde{\mathcal H}_i\ar[d]
 \\
 T \ar[r]^t		& \mathcal{H}
 }$$
 Then $T\times_{\mathcal H}\tilde{\mathcal H}_i$ has the following description. There are two natural $\ell^i$-torsion \'etale sheaves on $\Y$: $\varphi^*(\mathcal L/\ell^i)$ (which has $\mathcal O_M/\ell^i$-rank 2), and $R^1f_*\mathcal O_M/\ell^i$ (which has $\mathcal O_M/\ell^i$-rank $2g$). Then $T\times_{\mathcal H}\tilde{\mathcal H}_i$ corresponds to the (finite) set of injective maps of sheaves of abelian groups: $\psi\colon \varphi^{*}(\mathcal L/\ell^i)\hookrightarrow R^1f_*\mathcal O_M/\ell^i$. This finite set is canonically a scheme. It follows that $\tilde{\mathcal H}_i\rightarrow \mathcal H$ is relatively representable, and hence $\tilde{\mathcal H}_i$ is represented by a Deligne-Mumford stack of finite type over $S$.
 \begin{proposition}\label{prop:psi_finite}
 The natural map ``forget $\psi$'': $\tilde{\mathcal H_i}\rightarrow \mathcal H$ is finite.
 \end{proposition}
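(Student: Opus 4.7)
The plan is to prove finiteness by establishing that $\tilde{\mathcal H}_i \to \mathcal H$ is proper and quasi-finite; combined with the schematic representability already observed in the preceding paragraph, this gives finiteness. Quasi-finiteness is immediate: the fiber over a geometric point with data $(\Y,\varphi,\lambda)$ and abelian scheme $f\colon A_\W \to \W$ is a subset of $\mathrm{Hom}_{\pi_1^{\mathrm{\acute{e}t}}(\W)}$ between two finite $\mathcal O_M/\pi_M^i$-modules (the stalks of $\varphi^*(\mathcal L/\pi_M^i)$ and $R^1 f_*\mathcal O_M/\pi_M^i$), which is a finite set.

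The substantive step is properness, which I verify via the valuative criterion. Given a DVR $R$ with fraction field $K'$, an $R$-point $(\Y_R,\varphi_R,\lambda_R)$ of $\mathcal H$ (with associated abelian scheme $f_R\colon A \to \W_R:=\varphi_R^{-1}(\U_R)$), and a generic lift with extra datum $\psi_{K'}$ on $\W_{K'}$, the plan is to show that $\psi_{K'}$ extends uniquely to $\psi_R$ on $\W_R$. The key geometric observation is that $\W_R$ is smooth over $R$ and irreducible: since $\Y_R$ is smooth proper over $R$ with geometrically connected (hence geometrically irreducible) fibers, $\Y_R$ itself is irreducible by flatness, and therefore so is its open dense subscheme $\W_R$. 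In particular $\W_R$ is normal and connected. The standard Galois-theoretic principle that restriction of finite \'etale covers of a normal connected scheme to a dense open is fully faithful then yields surjectivity of $\pi_1^{\mathrm{\acute{e}t}}(\W_{K'}) \to \pi_1^{\mathrm{\acute{e}t}}(\W_R)$, equivalently, a natural bijection on Hom sets of locally constant torsion \'etale sheaves on $\W_R$ versus on $\W_{K'}$. This bijection produces the unique extension $\psi_R$.

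Finally, one must confirm that the unique $\psi_R$ still satisfies the ``nonzero mod $\pi_M$'' condition required to lie in $\tilde{\mathcal H}_i(R)$. Applying the same Hom bijection at the level of $\mathcal O_M/\pi_M$-coefficient sheaves, the restriction of $\psi_R \bmod \pi_M$ to $\W_{K'}$ is precisely $\psi_{K'} \bmod \pi_M$, and restriction is injective on Hom groups; since $\psi_{K'} \bmod \pi_M \neq 0$ by hypothesis, $\psi_R \bmod \pi_M \neq 0$ as well. This completes the valuative criterion, and hence the proof. The main delicate point is the irreducibility (hence connectedness) of $\W_R$, which simultaneously underpins the existence and uniqueness of the extension and propagates the mod-$\pi_M$ non-vanishing; this is precisely why the definition of $\mathcal H$ requires $\Y$ to have geometrically connected fibers.
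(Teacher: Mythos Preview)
Your proof is correct and follows essentially the same approach as the paper: quasi-finiteness from the finiteness of the Hom set, and properness via the valuative criterion using the fact that maps of finite \'etale sheaves on a normal irreducible scheme are determined by (and extend from) their generic fiber. You are in fact more careful than the paper on two points: you justify explicitly why $\W_R$ is normal and irreducible, and you verify that the mod-$\pi_M$ non-vanishing condition persists under the extension, a point the paper's proof omits.
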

 \begin{proof}
 It is obviously quasi-finite because, as argued above, if we fix $i$, then there are only finitely many choices for $\psi$. To prove it is finite, we show that it is proper.

 As both $\tilde{\mathcal H}_i$ and $\mathcal H$ are of finite type over $S=\Spec(\OKN)$, it suffices to simply check the valuative criterion for properness. Let $R$ be a discrete valuation ring with fraction field $F$. Suppose we have $(\Y,\varphi,\lambda) \in \mathcal H(R)$ and $(\Y_F,\varphi_F,\lambda_F,\psi_F)\in \tilde{\mathcal H}_i(F)$. Therefore we have a principally polarized abelian scheme $f'\colon A_{\W}\rightarrow \W$ (of dimension $g$, with trivial $\ell^3$ torsion), together with a map of torsion \'etale sheaves over $\W_F$ whose reduction modulo $\pi_M$ is nontrivial:

 $$\psi_F\colon \varphi^*(\mathcal L)/\pi_M^i|_{\W_F}\rightarrow R^1f'_{*}\mathcal O_M/\pi_M^i|_{\W_F} .$$
Note the following. If one has two finite \'etale sheaves on an irreducible normal scheme, and a morphism between them over the generic point, then that morphism uniquely extends to the whole scheme. (Here, we are closely following \cite[Proof of Lemma 23]{snowden2018constructing}.) Therefore, $\psi_F$ extends to a $\psi$ on all of $\U'_R$, and we have verified the valuative criterion for properness.
\end{proof}

\begin{definition}\label{def:H_infinity}For $i\geq 1$, set $$\mathcal H_i:=\text{Im}(\tilde{\mathcal H_i}\rightarrow \mathcal H).$$
 As $\tilde{\mathcal H}_i\rightarrow \mathcal H$ is relatively representable and finite (\autoref{prop:psi_finite}), it is universally closed. Therefore $\mathcal H_i$ is a closed subset of $|\mathcal H|$, which we may equip with the induced reduced substack structure \cite[Tag 0508]{stacks-project}. According the natural transformations $\tilde{\mathcal H}_{j}\rightarrow \tilde{\mathcal H}_{i}$, the sequence of closed subsets are descending. Set:
 $$\mathcal H_{\infty}:={\bigcap_{j}\mathcal H_j},$$
 which is also equipped with the reduced induced substack structure. Then $\mathcal H_i$ and $\mathcal H_{\infty}$ are Deligne-Mumford stacks of finite type over $S=\Spec(\OKN)$ for all $i\geq 1$.
 \end{definition}

 \begin{lemma}\label{remark:Hk}Let $T$ be an $S$-scheme and let $(\Y,\varphi,\lambda)\in \mathcal H(T)$. Then the following conditions are equivalent:
 \begin{enumerate}
 	\item $(\Y,\varphi,\lambda)\in \mathcal H_\infty(T)$;
 	\item there exists an injection
 	$$\varphi^*(\mathcal L)\hookrightarrow R^1f_*\mathcal O_M$$
 	of lisse $\mathcal O_M$-sheaves on $T$;
 	\item there is an injection $\tau\colon \varphi^*(\mathcal L\otimes M)\hookrightarrow R^1f_*M$ of lisse $M$-sheaves on $T$.
 \end{enumerate}
 \end{lemma}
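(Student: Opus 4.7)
The plan is to handle the equivalence $(2) \Leftrightarrow (3)$ and the implication $(2) \Rightarrow (1)$ as standard formalities, and to devote the real work to the reverse implication $(1) \Rightarrow (2)$, which proceeds via a K\"onig-type diagonalization on the finite sets of mod-$\pi_M^i$ embeddings produced by the hypothesis.

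For $(2) \Leftrightarrow (3)$: tensoring an integral injection $\phi$ with $M$ yields (3); conversely, given an $M$-injection $\tau$ as in (3), the images $\tau(\varphi^*\mathcal L)$ and $R^1f_*\mathcal O_M$ are commensurable $\mathcal O_M$-lattices inside $R^1f_*M$, so a suitable power $\pi_M^k\tau$ lands in $R^1f_*\mathcal O_M$ as an $\mathcal O_M$-injection; choosing $k$ so that $\pi_M^{k-1}\tau$ fails the containment ensures the image is not contained in $\pi_M R^1f_*\mathcal O_M$, yielding (2). For $(2) \Rightarrow (1)$: after dividing $\phi$ by its maximal $\pi_M$-divisor we may assume $\phi$ is already $\pi_M$-primitive, and the reductions $\psi_i := \phi \bmod \pi_M^i$ realize $(\Y,\varphi,\lambda,\psi_i) \in \tilde{\mathcal H}_i(T)$ for every $i$, so that $(\Y,\varphi,\lambda) \in \bigcap_i \mathcal H_i(T) = \mathcal H_\infty(T)$.

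For the substantive direction $(1) \Rightarrow (2)$, recall from \autoref{prop:psi_finite} that each $\tilde{\mathcal H}_i \to \mathcal H_i$ is representable, surjective, and finite; hence the hypothesis $(\Y,\varphi,\lambda) \in \mathcal H_i(T)$ produces, after an \'etale cover of $T$, a lift to $\tilde{\mathcal H}_i$, i.e., a map $\psi_i$ with nonzero mod-$\pi_M$ reduction. Fixing a geometric point $\bar t \to T$, the set $S_i^{\bar t}$ of such $\psi_i$ over $\bar t$ is \emph{finite}, since it sits inside the finite set of $\pi_1(\W_{\bar t})$-equivariant $\mathcal O_M/\pi_M^i$-linear maps between the two finite modules $(\varphi^*\mathcal L/\pi_M^i)_{\bar t}$ and $(R^1f_*\mathcal O_M/\pi_M^i)_{\bar t}$. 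Reduction modulo $\pi_M^i$ turns $\{S_i^{\bar t}\}$ into a projective system of finite nonempty sets, and K\"onig's lemma produces a compatible tower $(\psi_i^{\bar t})$ whose inverse limit is an $\mathcal O_M$-linear map $\phi^{\bar t}$ with nonzero mod-$\pi_M$ reduction. To globalize, one uses that $\varprojlim_i \tilde{\mathcal H}_i$ sits over $\mathcal H_\infty$ by pro-finite \'etale morphisms: the pointwise lifts spread over a sufficiently fine \'etale cover of $T$ and then descend to $T$ by the rigidity of morphisms between lisse \'etale sheaves.

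The step I expect to require the most care is promoting the nonzero $\phi$ obtained above to an actual \emph{injection}: a rank-$2$ source can acquire a rank-$1$ kernel upon tensoring with $M$. Injectivity is automatic whenever $\varphi^*\bL$ remains irreducible as an $M$-local system on $\W$; while the standing hypothesis (1) gives irreducibility of $\bL$ on $\U$, restriction to the finite-index subgroup $\pi_1(\W)\subset\pi_1(\U_T)$ cut out by $\varphi$ need not preserve irreducibility. The resolution is to combine the semisimplicity of $\varphi^*\bL$ as an $M$-sheaf (restrictions of irreducible representations to finite-index subgroups are semisimple) with the $\pi_M$-primitivity of the $\psi_i$ built by the diagonalization: any putative kernel of $\phi\otimes M$ is a direct summand, and the primitivity forces $\phi\otimes M$ to be nonzero on each isotypic component of $\varphi^*\bL$, ruling out a nonzero kernel and yielding the required integral injection.
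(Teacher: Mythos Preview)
Your overall architecture matches the paper's proof: the formalities $(2)\Leftrightarrow(3)$ and $(2)\Rightarrow(1)$ are handled identically, and for the substantive direction you run the same K\"onig/pigeonhole diagonalization on the finite sets $\mathrm{Hom}(\varphi^*\mathcal L/\pi_M^i,\,R^1f_*\mathcal O_M/\pi_M^i)$ to assemble a compatible tower and hence a nonzero limit map. One minor difference: your detour through \'etale covers of $T$, pointwise lifts, and descent is unnecessary. The paper simply observes that for each $i$ the admissible $\psi'_i$ form a finite nonempty set of maps between two fixed locally constant torsion sheaves on $\W$, and such maps are rigid (they are determined at any geometric point of a connected normal base, as in the proof of \autoref{prop:psi_finite}); so the diagonalization runs directly over $T$ without any globalization step.

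The genuine gap is in your final paragraph. You are right to flag that irreducibility of $\varphi^*(\mathcal L\otimes M)$ on $\W$ is not an immediate formal consequence of irreducibility of $\bL$ on $\U$, but your proposed repair is incorrect: $\pi_M$-primitivity of $\phi$ says only that $\phi\bmod\pi_M$ is nonzero \emph{somewhere}, not that it is nonzero on every isotypic summand. Concretely, if $\varphi^*\mathcal L\cong\mathcal L_1\oplus\mathcal L_2$ integrally and $\phi=\mathrm{id}\oplus 0$ into a target containing a copy of $\mathcal L_1$, then $\phi$ is $\pi_M$-primitive yet $\ker(\phi\otimes M)=\bL_2$. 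Semisimplicity of the source does nothing to prevent this. The paper's route is different and shorter: it proves $(1)\Rightarrow(3)$, obtains a nonzero $M$-linear map $\psi\colon\varphi^*(\mathcal L\otimes M)\to R^1f_*M$, and then invokes irreducibility of the source to conclude that $\psi$ is injective. You should appeal to the irreducibility hypothesis on $\bL$ at this point rather than try to extract injectivity from primitivity, which cannot work.
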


\begin{proof}
\textbf{(2) $\Rightarrow$ (3):} By applying $- \otimes_{\mathcal O_M} M$ to the injection $\varphi^*(\mathcal L)\hookrightarrow R^1f_*\mathcal O_M$ , we get the desire injection. (Note that both are lisse $\mathcal O_M$-sheaves, so tensoring with $M$ yields an injective map.)

\textbf{(3) $\Rightarrow$ (2):} Since $R^1f_*\mathcal O_M$ (resp. $\mathcal L$) is an $\mathcal O_M$-lattice in $R^1f_*M$ (resp. $\mathcal L\otimes M$), there exists some integer $\iota$ such that
\[\pi_M^\iota\cdot \tau\left(\varphi^*(\mathcal L)\right)\subset R^1f_*\mathcal O_M.\]
Then the map $\pi_M^\iota\tau$ is an injection from $\varphi^*(\mathcal L)$ to $R^1f_*\mathcal O_M$.

\textbf{(2) $\Rightarrow$ (1):} Denote by $\psi'$ the injection in (2). It is clear there exists some integer $\lambda$ such that
\[\mathrm{im}(\psi')\subset \pi_M^\lambda R^1f_*\mathcal O_M \quad \text{and} \quad \mathrm{im}(\psi')\not\subset \pi_M^{\lambda+1} R^1f_*\mathcal O_M.\]
Denote $\psi = \frac{\psi'}{\pi_M^\lambda}$ which is clearly an injection from $\varphi^*(\mathcal L)$ to $R^1f_*\mathcal O_M$ and satisfies
\[\mathrm{im}(\psi)\not\subset \pi_M R^1f_*\mathcal O_M.\]
This is equivalent to saying that the reduction modulo $\pi_M$ of $\psi$ is nontrivial.
Denote $\psi_i = \psi\mod(\pi_M^i)$ for each $i>0$,
\[\psi_i \colon \varphi^*(\mathcal L)/\pi^i_M \rightarrow R^1f_*\mathcal O_M/\pi^i_M.\]
Since $\psi_i\mod(\pi_M) = \psi\mod(\pi_M) \neq 0$, the quadruple $(\Y,\varphi,\lambda,\psi_i)\in \tilde{\mathcal H}_i$. Thus $(\Y,\varphi,\lambda)\in \cap_{i=1}^\infty \mathcal H_i = \mathcal H_\infty$.

\textbf{(1) $\Rightarrow$ (3):} (This is the main content of the Lemma.) Since $(\Y,\varphi,\lambda)\in \mathcal H_\infty(T)$, for each $i>0$, there exists a map
\[\psi'_i \colon \varphi^*(\mathcal L)/\pi^i_M \rightarrow R^1f_*\mathcal O_M/\pi^i_M\]
which is nontrivial modulo $\pi_M$. In general, the $\psi'_i$'s \textbf{do not} form a compatible sequence, i.e., it is possible that there exists $j>i$ with the following property: $\psi'_{j} \mod{(\pi_M^i)} \not\equiv \psi'_i$. Therefore, one can not directly take projective limits to find our desired map $ \varphi^*(\mathcal L) \rightarrow R^1f_*\mathcal O_M$. However, we claim we may derive a compatible sequence from $\psi_i'$ as follows.

Consider the subset in the finite set
\[\Sigma_1 = \mathrm{Hom}(\varphi^*(\mathcal L)/\pi_M, R^1f_*\mathcal O_M/\pi_M)\]
consisting of all modulo $\pi_M$ reductions of $\psi_i'$:
\[\{\psi'_i\mod(\pi_M) \mid i\geq 1\}.\]
By the pigeonhole principle, there exists a nontrivial map $\psi_1\in \Sigma_1$ and an infinite subset $\mathbb N_1\subset \mathbb N$ such that $\psi'_i\mod(\pi_M) = \psi_1$ for any $i\in\mathbb N_1$.

Suppose we have constructed a compatible sequence $\psi_1,\psi_2,\cdots,\psi_r$ and an infinite subset $\mathbb N_r\subset \mathbb N$ satisfying
\[\psi'_i\mod(\pi_M^j) = \psi_j \in \mathrm{Hom}(\varphi^*(\mathcal L)/\pi_M^j, R^1f_*\mathcal O_M/\pi_M^j) \]
for any $i\in\mathbb N_r$ and $j\in\{1,2,\cdots,r\}$. Then we consider the subset in the finite set
\[\Sigma_{r+1} = \{\rho\in \mathrm{Hom}(\varphi^*(\mathcal L)/\pi_M^{r+1}, R^1f_*\mathcal O_M/\pi_M^{r+1}) \mid \rho\mod{\pi_M^r} = \psi_r\}\]
consisting of all modulo $\pi_M^{r+1}$ reductions
$\psi'_i\mod(\pi_M^{r+1})$:
\[\{\psi_i' \mod (\pi_M^{r+1})\mid i\in\mathbb N_r\}.\]
Again by the pigeonhole principle, there exists a nontrivial map $\psi_{r+1}\in \Sigma_{r+1}$ and an infinite subset $\mathbb N_{r+1}\subset \mathbb N_r$ such that $\psi'_i\mod(\pi_M^{r+1}) = \psi_{r+1}$ for any $i\in\mathbb N_{r+1}$.

Iteratively, we find a sequence $\psi_1,\psi_2,\cdots$ satisfying
\[\psi_j \mod(\pi_M^i) = \psi_i\]
for each $j>i$. Taking projective limits and tensoring with $M$, one gets a nonzero map
\[\psi\colon \varphi^*(\mathcal L\otimes M)\rightarrow R^1f_* M.\]
Since $\mathcal L\otimes M$ is irreducible, $\psi$ is injective.
\end{proof}

 \section{Rigidity}\label{section:rigidity}
In this section, we prove the following. Recall that $S=\Spec(\OKN)$
 \begin{lemma}Let $\mathcal H/S$ be as in \autoref{section:moduli}. Then, after potentially increasing $N$ (equivalently, replacing $S$ by a non-empty Zariski open subset), the relative dimension of $\mathcal H/S$ is 0.
 \end{lemma}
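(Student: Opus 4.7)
The plan is to prove that the generic fiber $\mathcal H_K$ has dimension zero; upper semi-continuity of fiber dimension applied to the finite-type morphism $\mathcal H\to S$ will then let us shrink $S$ so that every fiber has dimension $0$. I would factor the argument through the forgetful morphism $(\Y,\varphi,\lambda)\mapsto(\Y,\varphi)$ to the Hurwitz stack of tame finite covers $\varphi:\Y\to\X_K$ of degree $\leq |\mathrm{GL}_{2g}(\bZ/\ell^3\bZ)|$, \'etale over $\U_K$, with geometrically connected fibers. This Hurwitz stack has dimension zero over $K$: it is finite \'etale over the discrete data consisting of a branch divisor contained in $\D_K$ together with a conjugacy class of monodromy homomorphisms from the (topologically finitely generated) profinite group $\pi_1(\U_{\bar K})$ into a symmetric group of bounded order. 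It therefore suffices to show that, for each fixed $(\Y_{\bar K},\varphi)$, the residual problem — parametrising maps $\lambda:\Y_{\bar K}\to \mathscr A^*_{g,1,\ell^3,\bar K}$ with $\lambda(\W)\subset\mathscr A_{g,1,\ell^3}$, with $\deg\lambda^*\alpha\leq b$, and with some cusp $\infty\in\Y_{\bar K}\setminus\W$ sent to a zero-dimensional cusp of $\mathscr A^*$ — is itself zero-dimensional.

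The essential input is that the cusp condition enforces strong non-isotriviality: by the moduli-theoretic description of the Baily--Borel compactification, a zero-dimensional cusp parametrises rank-$g$ (purely toric) degenerations, so the induced principally polarized abelian scheme $f:A_\W\to\W$ degenerates maximally at $\infty$. Grothendieck's semistable reduction criterion then gives that the local geometric monodromy at $\infty$ is maximally unipotent, in particular of infinite order, so $f$ is non-isotrivial. The residual moduli problem is thus contained in the infinitesimal rigidity of non-isotrivial polarized abelian schemes with semistable reduction and at least one maximally unipotent cusp. Following the Hodge-theoretic strategy (in the tradition of Arakelov, Faltings, and Saito), I would pass to a smooth toroidal compactification $\bar{\mathscr A}^{\mathrm{tor}}_{g,1,\ell^3}$, so that the boundary is simple normal crossings; the logarithmic cotangent sheaf of $\bar{\mathscr A}^{\mathrm{tor}}$ pulls back along the period map to $\mathrm{Sym}^2(E^{1,0})$, where $E^{1,0}$ is the logarithmic Hodge bundle of $f$. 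Infinitesimal deformations of $\lambda$ fixing $\Y$ are then classified by $H^0(\Y,\mathrm{Sym}^2(E^{1,0})^{\vee})$, and the logarithmic Higgs field $\theta:E^{1,0}\to E^{0,1}\otimes\omega_\Y(\Y\setminus\W)$ — which is an isomorphism at the maximally degenerating cusp $\infty$ — together with an Arakelov-style positivity inequality for $E^{1,0}$ analogous to that of \autoref{lemma:main_char_p}, forces this cohomology to vanish.

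I expect the main obstacle to be a clean treatment of the logarithmic tangent sheaf of the compactified moduli stack: the Baily--Borel compactification is singular at its zero-dimensional cusps, so the infinitesimal argument must be performed on a toroidal compactification, and one must check that the lift of $\lambda:\Y\to\mathscr A^*$ to $\bar{\mathscr A}^{\mathrm{tor}}$ (which exists because $\Y$ is smooth of dimension one) identifies the tangent space of $\mathrm{Hom}(\Y,\mathscr A^*)$ at $\lambda$ with the Hodge-theoretic cohomology group above. Once this identification is in place, the positivity of the log Hodge bundle and the maximality of $\theta$ at $\infty$ deliver the required vanishing, giving zero-dimensionality (and generic reducedness, by standard smoothness arguments) of $\mathcal H_K$, and hence of every fiber of $\mathcal H/S$ after possibly further increasing $N$.
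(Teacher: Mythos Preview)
Your proposal is correct and takes essentially the same route as the paper: reduce to showing the characteristic-zero fiber is zero-dimensional (then shrink $S$ by finite type/semi-continuity), dispose of the cover $(\Y,\varphi)$ via the zero-dimensionality of the tame Hurwitz stack, and reduce the remaining problem to the rigidity of a principally polarized abelian scheme over a complex curve with a totally degenerate fiber. The paper's proof is a one-line citation of Saito's rigidity theorem (Theorem~8.6 and Lemma~3.4 of \cite{saitorigid}) for this last step, whereas you sketch the Hodge-theoretic mechanism behind it; your outline is on target, though the final implication ``positivity of $E^{1,0}$ $+$ $\theta$ an isomorphism at $\infty$ $\Rightarrow$ $H^0(\Y,\mathrm{Sym}^2(E^{1,0})^\vee)=0$'' is exactly the non-trivial content of Saito's paper and needs more than positivity alone.
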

 \begin{proof}We have shown that $\mathcal H/S$ is a finite type Deligne-Mumford stack. To show the desired result, it suffices to show that that if $K\hookrightarrow \mathbb C$ is an embedding, then $\mathcal H_{\CC}$ has dimension 0. Equivalently, we want to show that if $A_{U_{\CC}}\rightarrow U_{\CC}$ is a principally polarized abelian scheme that is totally degenerate at at least one cusp, then it is rigid. This immediately follows from Theorem 8.6 together with Lemma 3.4 and the following text of \cite{saitorigid}.
 \end{proof}

\section{The proof}
\begin{proof}[Proof of \autoref{theorem:main}]
First, assume that $\bL$ has bad, unipotent reduction around every cusp. Let $\mathcal T_1$ be the set of those prime $\p$ of $\mathcal O_K$ with the following properties: the underlying prime $p$ splits completely in $E$, and $p>\max(N,\ell^3)$. This is an infinite set by the Cebotarev density theorem. Let $\mL_{\p}$ be the restriction of $\mL$ to $U_{\p}$. Then $\mL_{\p}$ is irreducible by exactly the same argument as that of the first paragraph of \cite[Proof of Lemma 24, p. 2053]{snowden2018constructing}.

There are only finitely may subfields of $E$. It follows from the pigeonhole principle that there exists a subfield $F\subset E$ such that there exists infinitely many primes $\p\in \mathcal T_1$ such that $\mL_{\p}$ has Frobenius traces in $F\subset E$. Call the collection of such primes $\mathcal T_2\subset \mathcal T_1$.
Let $\mathcal H$ and $\mathcal H_{\infty}$ be the moduli spaces from \autoref{section:moduli} with $g=8[F:\bQ]$ and $b=4h\chi_{\text{top}}(U)$. Note that, after increasing $N$, both spaces have relative dimension $0$ over $\OKN$ by Section \ref{section:rigidity}.

First of all, note that for each $\p\in \mathcal T_2$, $\mathcal H_{\infty}(\kappa(\p))\neq \emptyset$. This follows by \autoref{lemma:main_char_p}, especially part (3.iv), together with \autoref{remark:Hk}. In more detail: \autoref{lemma:main_char_p} implies that we can find an abelian scheme $B_{\U_{\p}}\rightarrow {\U_{\p}}$ such that $\mathcal L_{\p}^4$ injects in the cohomology, that has semistable reduction at infinity, and such that the Hodge bundle on $\X_{\p}$ has bounded degree. \autoref{remark:Hk} then implies that such an abelian scheme corresponds to a point $\beta_{\p}$ in $\mathcal H_{\infty}(\kappa(\p))\subset \mathcal H(\kappa(\p))$. Since $\mathcal T_2$ is infinite and $\mathcal H_{\infty}/S$ is of finite type, it follows that there exists a finite field extension $K'/K$ and a point $\beta\in \mathcal H(K')$. In fact, as $\mathcal H_{\infty}$ has relative dimension $0$, our point $\beta$ may be chosen to be compatible with infinitely many of the $\beta_{\p}$, where compatibility is defined in the obvious sense. By definition of $\mathcal H_\infty$, the point $\beta \in \mathcal H_\infty(K')$ corresponds to an abelian scheme $B_{U'_{K'}}\rightarrow U'_{K'}$ such that $\mathcal L|_{U'_{K'}}$ injects into the integral $\mathcal O_M$ cohomology. By taking a Weil restriction, we obtain an abelian scheme $A_U\rightarrow U$ (of dimension $g[K':K]$) such that $\bL$ injects into the the cohomology of $A_U\rightarrow U$. Using Faltings' semi-simplicity theorem, we conclude that $\bL$ is in fact a summand of the cohomology, as desired.

In general, there exists a finite \'etale cover $f\colon U'\rightarrow U$ such that $f^*\mathbb L$ has the following property. Let $C'$ be the compactification of $U'$, and set $D'$ to be the divisor at infinity. Then for each $\infty\in D'$, the lisse $\ell$-adic sheaf $f^*\mathbb L$ has either good reduction at $\infty$ or bad, unipotent reduction at $\infty$. There then exists a curve $U'\subset V'\subset C'$, where $f^*\mathbb L$ extends to a lisse $\ell$-adic sheaf $\mathbb M'$ on all of $V'$, and moreover, has bad, unipotent reduction around every point in $C'\setminus V'$. Then the above argument applies, producing an abelian scheme $A_{V'}\rightarrow V'$ whose cohomology has $\mathbb M'$ as a summand. Restricting to $U'$ and then applying a Weil restriction of scalars along the finite \'etale map $U'/U$, we obtain the desired result.
\end{proof}

\appendix
\section{Frobenius untwisting and a second proof of \autoref{prop:vers_def}(2)}
In this appendix, we have two goals: we first provide a proof of \cite[Theorem 6.1]{xia2013deformation} in the context we need, which we use several times, and then we provide a second perspective on the termination of the Frobenius untwisting process in the context of \autoref{prop:vers_def}.

Before we begin the proofs, we need one preliminary claim. Let $(\bar{\mathcal C}, \mathcal{Z})$ be a lift of $(\bar C,Z)$ over $W(k)$. Let $(\mV,\nabla)$ be a vector bundle together with a logarithmic connection with nilpotent residues on $(\bar{\mathcal C},\mathcal Z)/W(k)$, such that $\nabla$ is topologically quasi-nilpotent. (Therefore, $(\mV,\nabla)$ is the \emph{value} of a logarithmic crystal $(C,Z)$ on the particular thickening $(\bar{\mathcal C},\mathcal{Z})$).

\begin{claim}The following two properties hold for $(\mV,\nabla)$.
\begin{enumerate}
\item[(i)] The logarithmic isocrystal $(\mV,\nabla)\otimes \Qp$ over $(\mathcal{\bar{C}},\mathcal{Z})|_{W(k)[1/p]}$ is semistable and of degree 0.
\item[(ii)] The degree of $\mV_p=\mV\otimes\mathbb F_p$, the restriction of the vector bundle $\mV$ to $\bar C$, is 0.
\end{enumerate}
\end{claim}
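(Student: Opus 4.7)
The plan is to reduce both parts of the claim to the standard Fuchs-type degree formula: if $(W,\nabla_W)$ is a logarithmic connection on a smooth projective curve $\bar C$ over a field, with log poles along a reduced divisor $Z$, then
\[
\deg W \;+\; \sum_{z\in Z} \operatorname{tr}\bigl(\operatorname{Res}_z\nabla_W\bigr) \;=\; 0.
\]
This is obtained by taking $\det$ of $\nabla_W$ to produce a logarithmic connection on $\det W$ and invoking the residue theorem (equivalently, $c_1$ of a vector bundle equals minus the sum of residues of any log connection on it). In particular, if every residue is nilpotent (hence traceless), then $\deg W=0$.

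For (ii), I would specialize $(\mathcal V,\nabla)$ to the closed fiber $(\bar C,Z)$ to get a logarithmic connection $(\mathcal V_p,\nabla_p)$. The residues of $\nabla_p$ at points of $Z$ are the specializations of the corresponding residues of $\nabla$, and nilpotency is preserved under any base change, so each $\operatorname{Res}_z\nabla_p$ is nilpotent. The Fuchs formula above then gives $\deg\mathcal V_p = 0$.

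For (i), I would work over the generic fiber $(\bar{\mathcal C},\mathcal Z)_{W(k)[1/p]}$ to obtain a logarithmic connection $(\mathcal V_{\mathbb Q_p},\nabla_{\mathbb Q_p})$ whose residues remain nilpotent; the same formula gives $\deg\mathcal V_{\mathbb Q_p} = 0$, establishing the degree statement. To prove semistability, take any $\nabla_{\mathbb Q_p}$-stable saturated subsheaf $\mathcal W\subset\mathcal V_{\mathbb Q_p}$, which on a smooth curve is automatically a sub-bundle. Because $\nabla_{\mathbb Q_p}$ preserves $\mathcal W$, the restriction $\nabla_{\mathbb Q_p}|_{\mathcal W}$ is again a logarithmic connection with log poles along $\mathcal Z_{W(k)[1/p]}$, and at each cusp its residue is the restriction to $\mathcal W|_z$ of the ambient nilpotent residue of $\nabla_{\mathbb Q_p}$, hence nilpotent. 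Applying the degree formula to $(\mathcal W,\nabla_{\mathbb Q_p}|_{\mathcal W})$ yields $\deg\mathcal W=0$, so $\mu(\mathcal W)=0=\mu(\mathcal V_{\mathbb Q_p})$, which is semistability.

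The one point that is not purely mechanical is the functoriality: the residue of the induced logarithmic connection on a $\nabla$-stable sub-bundle really is the restriction of the original residue, so that nilpotency descends. This is standard from the construction of $\operatorname{Res}_z$ as the $\mathcal O_{\mathcal Z}$-linear endomorphism of $\mathcal V|_{\mathcal Z}$ gotten by composing $\nabla$ with the residue map $\Omega^1_{\bar{\mathcal C}/W(k)}(\log\mathcal Z)\twoheadrightarrow \mathcal O_{\mathcal Z}$, but I would spell it out in the write-up so that the reader can see that the hypothesis on residues of $\nabla$ really is the only input used.
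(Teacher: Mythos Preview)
Your argument for (i) is correct and is essentially what the paper does: the Esnault--Viehweg reference they cite is precisely the residue/Fuchs formula over $\mathbb C$, and their semistability argument is identical to yours (restrict the nilpotent residues to a horizontal sub-bundle and apply the degree formula again).

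However, your proof of (ii) has a genuine gap. The Fuchs-type identity
\[
\deg W + \sum_{z\in Z}\operatorname{tr}\bigl(\operatorname{Res}_z\nabla_W\bigr)=0
\]
is a characteristic-zero statement. In characteristic $p$ it only holds as a congruence modulo $p$: the residue theorem still gives $\sum_z \operatorname{Res}_z(\omega)=0$ for a rational $1$-form $\omega$, but when you unwind this for $\nabla s/s$ with $s$ a rational section of $\det W$, the contribution $\operatorname{ord}_x(s)$ at a zero or pole of $s$ is taken in the ground field, i.e.\ modulo $p$. Concretely, $F^{*}\mathcal O_{\mathbb P^1}(1)\cong\mathcal O_{\mathbb P^1}(p)$ carries the canonical (Cartier) connection, which is everywhere regular and hence has all residues zero, yet its degree is $p\neq 0$. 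So from nilpotent residues of $(\mathcal V_p,\nabla_p)$ you can only conclude $p\mid\deg\mathcal V_p$, not $\deg\mathcal V_p=0$.

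This is exactly why the paper does not argue (ii) directly on the special fiber. Instead it proves (i) first and then uses that the degree (first Chern class) is locally constant in the flat family $\bar{\mathcal C}/W(k)$ to transport $\deg(\mathcal V\otimes\mathbb Q_p)=0$ to $\deg\mathcal V_p=0$. If you want a self-contained fix, replace your direct appeal to the residue formula in characteristic $p$ by this specialization-of-degree step; the rest of your write-up can stay as is.
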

\begin{proof}[Proof of claim]
To prove that $\mV\otimes \Qp$ has degree 0, it suffices to base change along a map $W(k)[1/p]\hookrightarrow \mathbb C$. Then the result follows from a computation of as Esnault-Viehweg \cite[Appendix B]{esnaultviehweg}. Now semistability follows easily. Indeed, any horizontal subsheaf of $\mV\otimes \Qp$ is necessarily a bundle, which is therefore equipped with a logarithmic flat connection and has nilpotent residues. By the first sentence, this implies that this horizontal subsheaf has degree 0, validating semistability.

To prove the second statement, it suffices to note that degree, being the first Chern class, is locally constant, see \cite[Section 6]{KYZ20A}; therefore, if the degree of $\mV\otimes \Qp$ is $0$ on $\bar{\mathcal C}$, then so is the degree of $\mV_{p}$ on $\bar C$.
\end{proof}
The third term in the following lemma is a special case of \cite[Theorem 6.1]{xia2013deformation} that we need. In particular, we work in the context of strictly semistable $p$-divisible groups on $(\bar{C},Z)$, as this allows us to discuss the destabilizing iteration.

\begin{lemma}\label{Xia}
Let $G_C\rightarrow C$ be a strictly semistable height $2$, dimension $1$ $p$-divisible group on $C$. Suppose the Kodaira-Spencer map of $G_C\rightarrow C$ is $0$.
\begin{enumerate}
	\item Let $\mathbb D(G_C)$ be the Dieudonn\'e module of $G_C$. Then the Dieudonn\'e crystal $\mathbb D(G_C)$ canonically extends to a logarithmic Dieudonn\'e crystal on $(\bar{C},Z)$.
	\item Set $(\mM,\nabla, F,V)$ denote the evaluation of the logarithmic extension of $\mathbb D(G_C)$ on the log pair $(\bar{\mathcal{C}}, \mathcal Z)$. Then the Hodge line bundle $L$ in $\mathcal M_p = \mathcal M\otimes \mathbb F_p$ has positive degree, which is the maximal destabilizing subbundle	of $(\mM_p,\nabla_p)$.
	\item Then there exists an isogenous $p$-divisible group $G'_C\rightarrow C$, such that the Frobenius pullback $G'_C{}^{(p)}$ is isomorphic to $G_C$.
\end{enumerate}
\end{lemma}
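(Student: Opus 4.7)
The plan is to prove the three parts in order. \emph{Part (1)} is a direct application of the existing machinery: since $G_C$ is strictly semistable in the sense of de Jong, Kato-Trihan \cite[4.4-4.8]{katotrihan} provides a canonical extension of $\mathbb D(G_C)$ to a log Dieudonn\'e crystal with nilpotent residues on $(\bar C, Z)$, and uniqueness follows from \cite[Proposition A.11(3)]{krishnamoorthypal2022}; this is the same template already used in the proof of \autoref{lemma:main_char_p}.

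For \emph{Part (2)}, the Hodge filtration on $\mM_p$ produces a short exact sequence of line bundles $0\to L\to \mM_p\to Q\to 0$, and by the preliminary claim applied to $(\mM,\nabla)$ we have $\deg(\mM_p)=0$, hence $\deg(Q)=-\deg(L)$. The Kodaira-Spencer map is, by construction as the $\mathcal O$-linear second fundamental form, the composition
$$L\hookrightarrow \mM_p\xrightarrow{\nabla_p}\mM_p\otimes\Omega^1_{\bar C}(\log Z)\twoheadrightarrow Q\otimes\Omega^1_{\bar C}(\log Z),$$
so the hypothesis $KS=0$ is precisely the statement that $\nabla_p(L)\subset L\otimes\Omega^1_{\bar C}(\log Z)$, i.e., that $L$ is $\nabla_p$-horizontal. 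For the positivity of $\deg(L)$, I would use that the Hasse invariant is a canonical global section of $L^{\otimes(p-1)}$ whose vanishing locus equals the supersingular locus of $G_C$, and argue via strict semistability that $G_C$ has supersingular points: if $G_C$ were everywhere ordinary, then its multiplicative sub $p$-divisible group would extend across each $\infty\in Z$, forcing $G_C$ itself to extend and contradicting strict semistability (this mirrors the argument in the first proof of \autoref{prop:vers_def}(1)). Given $\deg(L)>0$ in a rank-$2$ bundle of degree $0$, the subbundle $L$ is necessarily the maximal destabilizing subbundle of $(\mM_p,\nabla_p)$.

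For \emph{Part (3)}, the horizontality of $L\subset \mM_p$ from Part (2) is the key technical input. I would follow Xia's strategy \cite[Theorem 6.1]{xia2013deformation}, adapted to the logarithmic setting: using the horizontal Hodge filtration together with Cartier descent (or equivalently, a display-theoretic manipulation), one descends the log Dieudonn\'e crystal $(\mM,F,V)$ along absolute Frobenius to produce a new log Dieudonn\'e crystal $(\mM',F',V')$ on $(\bar C,Z)$ with $F^*\mM'\cong \mM$. Applying the Kato-Trihan equivalence, $\mM'$ corresponds to a strictly semistable $p$-divisible group $G'_C\to C$ with $\mathbb D(G'_C)=\mM'$ and $G'_C{}^{(p)}\cong G_C$, and the relative Frobenius $G'_C\to G'_C{}^{(p)}=G_C$ supplies the desired isogeny.

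The main obstacle is Part (3): one must carry out the Frobenius untwisting in a manner compatible with the logarithmic structure along $Z$, and verify that strict semistability is preserved, so that the Kato-Trihan equivalence yields a genuine $p$-divisible group on $C$ rather than merely a log Dieudonn\'e crystal on $(\bar C, Z)$. The horizontal subbundle $L$ obtained in Part (2) is exactly the ingredient that permits the descent through Frobenius, and verifying the compatibilities at the cusps will require some care.
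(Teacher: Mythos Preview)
Your plan for Part~(1) matches the paper. Part~(3) is in the right spirit but vaguer than the paper's explicit construction, and Part~(2) contains a genuine gap.

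\textbf{Part (2).} You assert that if $G_C$ were everywhere ordinary, the extension of its multiplicative sub $H_C$ across each $\infty\in Z$ would force $G_C$ itself to extend. This does not follow: the \'etale quotient $G_C/H_C$ need not extend (its local monodromy at $\infty$ can be nontrivial), and de Jong's semistability filtration imposes no such condition on $G/G^f$. The argument in the first proof of \autoref{prop:vers_def}(1) that you say you are mirroring does \emph{not} conclude that $G_C$ extends; it observes that the extension of $H_C$ across all of $Z$ makes $\mathbb D(H_C)\otimes\Qp$ overconvergent and hence a proper sub-object of $\mathbb D(G_C)\otimes\Qp$ in $\fisocd{C}$, contradicting absolute \emph{irreducibility}. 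The lemma does not list irreducibility among its hypotheses, but the paper's proof simply points back to \autoref{prop:vers_def}(1) and thereby imports that hypothesis from the ambient context (the appendix is in service of reproving \autoref{prop:vers_def}(2)). Your variant drops precisely the ingredient that closes the argument.

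\textbf{Part (3).} The paper, following Xia, does not invoke an abstract Cartier descent or display manipulation. It sets $\mM':=\ker(\mM\to\mM_p\to\mM_p/L)$, so that $p\mM\subset\mM'$ and $\mM'/p\mM=L$; horizontality of $L$ (from Part~(2)) ensures $\nabla$ restricts to $\mM'$, and the identity $L^{\sigma}=\mathrm{im}(V_p)=\ker(F_p)$ from \cite[Proposition 2.5.2]{de1995crystalline} is then used to verify by hand, locally on a lift of Frobenius, that $F$ and $V$ restrict to $\mM'$. Finally one applies de Jong's equivalence over the open curve $C$ (not a Kato--Trihan log equivalence) to obtain $G'_C$ with $(G'_C)^{(p)}\cong G_C$. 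Your concern about preserving strict semistability at the cusps is reasonable, but the paper does not engage with it: it works over $C$ and invokes de Jong there.
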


\begin{proof}
 Since $G_C$ is semistable, the first term follows from \cite[Corollary 3.14]{trihanBT}. And the second term follows from the existence of supersingular points as in \autoref{prop:vers_def}(1) via the Hasse-Witt map. Consider the Kodaira-Spencer map
$$\theta: L\to \mM_p/L\otimes \Omega^1_{\bar C}(\log Z).$$
By assumption $\theta=0$, thus $L\subset (\mM_p,\nabla_p)$ is a horizontal subbundle. Since $\deg \mM_p=0$ and $\deg L>0$, the line bundle $L$ is just the maximal destabilizing subbundle of $(\mM_p,\nabla_p)$.

For the third term, We mainly follows Xia's original proof.
Set $(\mM',\nabla')$ to be the kernel of the following composition map
\[ (\mM,\nabla) \xrightarrow{\quad\pi\quad} (\mM_p,\nabla_p)\to (\mM_p,\nabla_p)/(L,\nabla_p)=:(N,\nabla_p),\]
where $\pi\colon (\mM,\nabla)\rightarrow (\mM_p,\nabla_p)$ is the reduction modulo $p$ map. In particular, one has
\begin{equation}
	p \mM\subset \mM', \quad \pi(\mM') = \mM'/p \mM = L, \quad \text{and} \quad N = \mM/\mM'.
\end{equation}

The crucial point is to show the Frobenius structure and the Verschiebung extend; if we show this, then we will obtain a new logarithmic Dieudonn\'e module $(\mM',\nabla',F',V').$

Locally, over an affine open subset $\mathcal U=\mathrm{Spec}(R)$, we choose a lifting $\Phi\colon \widehat{R}\rightarrow \widehat{R}$ of the absolute Frobenius map $\sigma\colon R/pR\rightarrow R/pR$. Then the Frobenius structure and Verschiebung structure are given by
\[F\colon \mM(U)^\Phi \rightarrow \mM(U) \quad \text{and} \quad V\colon \mM(U) \rightarrow \mM(U)^\Phi.\]
Recall \cite[Proposition 2.5.2]{de1995crystalline}, $L$ is the unique subbundle of $\mM_p$ such that
\begin{equation} \label{subbundle}
	(L)^\sigma = \mathrm{im}(V_p) = \ker(F_p).
\end{equation} In particular,
\begin{equation*}
	\pi\big(F(\mM'(U)^\Phi)\big) = F_\p\big( \pi( \mM'(U)^\Phi)\big) = F_\p\big( \pi( \mM'(U))^\sigma\big) \overset{(\ref{BasicFactors})}{=} F_p((L(U))^\sigma) \overset{(\ref{subbundle})}{=} 0.
\end{equation*}
This implies that
\[F(\mM'(U)^\Phi) \subseteq \p \mM(U) \subseteq \mM'(U)\]
and that the Frobenius structure $F$ can be restricted onto $(\mM',\nabla')$, denoted by $F'$. Similarly,
\begin{equation}
	\pi\big(V(\mM'(U))\big) \subseteq \pi \big( V(\mM(U)) \big)= \mathrm{im}(V_p)(U) \overset{(\ref{subbundle})}{=} (L)^\sigma
\end{equation}
This implies that
\[	V(\mM'(U)) \subseteq \pi^{-1}((L)^\sigma ) \overset{(\ref{BasicFactors})}{=} \mM'(U)^\Phi.\]
and that the Verschiebung structure $V$ can be restricted onto $(\mM',\nabla')$, denoted by $V'$. The module $(\mM, \nabla, F, V)'$ is the realization of the $\mathbb D(G'_C)$ of a $p$-divisible group $G'_C$ which satisfies $G'_C{}^{(p)}=G$ by de Jong's fundamental theorem.
\end{proof}

\begin{proof}[Second proof of \autoref{prop:vers_def} (2)]
 Recall that a $p$-divisible group is called \emph{generically versally deformed} if the corresponding Kodaira-Spencer map is nonzero. From the third term in \autoref{Xia}, one may construct inductively
 \begin{enumerate}
 	\item an infinite sequence of $p$-divisible groups over $C$
 	\[G^0_C=G_C,G^1_C,G^2_C,\cdots\]
 	such that $(G^{i+1}_{C})^{(p)}=G^i_C$ for all $i\geq0$ and whose Kodaira-Spencer maps are all zero; or
 	\item a finite sequence of $p$-divisible groups
 	\[G^0_C=G_C,G^1_C,\cdots, G^r_C\]
 	such that $(G^{i+1}_{C})^{(p)}=G^i_C$, the Kodaira-Spencer maps of $G^i_C$ are zero for all $i\in \{0,\cdots,r-1\}$, and the Kodaira-Spencer map of $G^{r}_C$ is nonzero;
 \end{enumerate}
To prove the second term of \autoref{prop:vers_def}, one only need to show that the first case does not appear. Suppose we are in the first case. By a method of Langer in \cite[Theorem 5.1]{Lan14}, we will construct a contradiction.

Let $(\mM^i,\nabla^i,F^i,V^i)$ be the logarithmic Dieudonn\'e module associated to $G^i_C$ and let $L^i$ be the Hodge line bundle in $\mM_p^i$. According the construction of $G^i_C$ as in the proof of \autoref{Xia}, one has
	\[ (\mM^0,\nabla^0, F^0, V^0) \supsetneq (\mM^1,\nabla^1,F^1,V^1)\supsetneq (\mM^2,\nabla^2, F^2, V^2) \supsetneq \cdots\]
where $(\mM^{i+1},\nabla^{i+1})$ to be the kernel of the following composition map
	\[ (\mM^i,\nabla^i) \xrightarrow{\quad\pi\quad} (\mM^i_p,\nabla^i_p)\to (\mM^i_p,\nabla^i_p)/(L^i,\nabla^i_p)=:(N^i,\nabla^i_p),\]
where $\pi\colon (\mM^i,\nabla^i)\rightarrow (\mM^i_p,\nabla^i_p)$ is the reduction modulo $p$ map. In particular, one has
\begin{equation} \label{BasicFactors}
	p \mM^i\subset \mM^{i+1}, \quad \pi(\mM^{i+1}) = \mM^{i+1}/p \mM^i = L^i, \quad \text{and} \quad N^i = \mM^i/\mM^{i+1}.
\end{equation}
Denote
	\[(\overline{\mM}^m,\overline{\nabla}^m,\overline{F}^m,\overline{V}^m):= \varprojlim_{n\geq m} (\mM,\nabla,F,V)^m /(M,\nabla,F,V)^n.\]
In the following, we show that the generic fiber $\overline{\mM}^m \otimes K$ of $\overline{\mM}^m$ is a destabilizing quotient of $\mM^m\otimes K = \mM_K$ for sufficiently large $m\gg0$; this will contradict semi-stability of $\mM_K$.

From \autoref{BasicFactors} and the construction of the sequence, one has exact sequences of $\mathcal O_{\X_\p}$-modules
	\[0\rightarrow L^n \rightarrow \mM^n_p \rightarrow N^n \rightarrow 0\]
	and
	\[0 \rightarrow N^n \xrightarrow{p} \mM^{n+1}_p \rightarrow L^n \rightarrow 0\]
	Denote by $C^n$ the kernel of the composition $L^{n+1} \rightarrow M_p^{n+1} \rightarrow L^n$. By the same reason as in the proof of \cite[Theorem 5.1]{Lan14}, $C^n=0$ for sufficient large $n$. By eliminating the first finitely many terms, we may assume $C^n=0$ for all $n\geq0$. Thus one gets injections
	\[L^0\supseteq L^1\supseteq L^2\supseteq \cdots \quad \text{and} \quad N^0\subseteq N^1\subseteq N^2 \subseteq \cdots\]
	Since the slope of $L^n$ is non-increasing and also non-negative, the sequence $L^n$ stabilizes to some $L$. Since $\deg L^n + \deg N^n = \deg \mM^n_\p = 0$, the sequence $N^n$ stabilizes to some $N$. Once again, by eliminating the first finitely many terms, we may assume $L^n = L$ and $N^n=N$ for all $n\geq0$. In particular, the composition map \[\mM^n/\mM^{n+1}=N^n \xrightarrow{p} \mM^{n+1}_\p\rightarrow N^{n+1}=\mM^{n+1}/\mM^{n+2}\]
	is an isomorphism for all $n\geq0$. Thus
	\begin{equation} \label{equ:aa11}
		\mM^{n+1} = p \mM^n + \mM^{n+2}
	\end{equation}
and
\begin{equation} \label{equ:aa22}
	 p \mM^{n+1} = p \mM^n \cap \mM^{n+2}
\end{equation}
for each $n\geq0$. We show that $\mM^i=p^{i}\mM^0 + \mM^n$ for any $0\leq i\leq n$ and $p \mM^{n-1} = p \mM^0 \cap \mM^n$ as follows:
	\begin{equation} \label{equ:aa33}
		\begin{split}
			\mM^i & \overset{\eqref{equ:aa11}}{=} p\mM^{i-1} +\mM^{i+1} \overset{\eqref{equ:aa11}}{=} p\mM^{i-1} + p\mM^{i+1} + \mM^{i+2} \\ & \overset{\eqref{equ:aa11}}{=}p\mM^{i-1} + p\mM^{i+1} + \cdots + p\mM^{n-1} + \mM^n = p\mM^{i-1} + \mM^n,\\
		\end{split}
	\end{equation}
\begin{equation}
	\begin{split}
		\mM^i & \overset{\eqref{equ:aa33}}{=} p\mM^{i-1} + \mM^n = \underbrace{p(p(\cdots p(p}_i\mM^0+\mM^n)+ \mM^n)\cdots + \mM^n) + \mM^n\\
		& = p^i\mM^0 + \mM^n,\\
	\end{split}
\end{equation}
and
\begin{equation}
	\begin{split}
		p\mM^{n-1} & \overset{\eqref{equ:aa22}}{=} p\mM^{n-2} \cap \mM^n \overset{\eqref{equ:aa22}}{=} (p\mM^{n-3} \cap \mM_{n-1} )\cap \mM^n \\
		& \overset{\eqref{equ:aa22}}{=} ((\cdots(p\mM^{0}\cap \mM_1)\cap \cdots) \cap \mM_{n-1} )\cap \mM^n = p\mM^0\cap \mM^n\\
	\end{split}
\end{equation}
In particular, there is an $W_n=W(k)/p^n$ module structure on $\mM^0/\mM^n$. Consider
	\begin{equation*}
		\begin{split}
			(\mM^0/\mM^n)\otimes_{W_n} (pW_n/p^nW_n) & \cong (\mM^0/\mM^n)\otimes_{W_n} W_n/p^{n-1}\\
			& = \mM^0/(p^{n-1} \mM^0+\mM^n) = \mM^0/\mM^{n-1} \\
			& \cong p \mM^0/ p \mM^{n-1} = p \mM^0/ (p \mM^0 \cap \mM^n)\\
			& \cong (p \mM^0+\mM^n)/\mM^n = \mM^1/\mM^n \\
		\end{split}
	\end{equation*}
	where the map can be easily checked to be given by $\overline{m} \otimes \overline{a} \mapsto \overline{am}$ for any $m\in \mM^0$ and any $a\in p$. One also has
	\[(\mM^0/\mM^n)\otimes_{W_n} k = \mM^0/(p \mM^0 + \mM^n) = \mM^0/\mM^1\]
	which implies the following sequence is exact
	\[0\rightarrow (\mM^0/\mM^n)\otimes_{W(k)} pW_n/p^n\rightarrow (\mM^0/\mM^n) \otimes_{W} W(k)/p^n \rightarrow (\mM^0/\mM^n)\otimes_{\mathcal O_K} k\rightarrow 0.\]
	Thus $\mathrm{Tor}^{W/p^n}(k,\mM^0/\mM^n)=0$ and $\mM^0/\mM^n$ is flat over $W_n$.
	It follows that $\widetilde{\mM}^0 = \varprojlim_{n\geq0} \mM^0/\mM^n$ is an $W(k)$-flat coherent $\mathcal O_{\bar{\mathcal C}}$-module having a filtration with quotients isomorphic to $N$. Thus $\widetilde{\mM}^0\otimes W(k)[1/p]$ is a destabilizing quotient of $\mM^0\otimes W(k)[1/p],$	 which contradicts the semistability of $\mM^0\otimes W(k)[1/p].$
\end{proof}

\begin{acknowledgement*} We would like to express our sincere gratitude to the referees for their valuable feedback and constructive criticism, which greatly contributed to the improvement of this paper.
R.K. warmly thanks Xia Jie for discussions about his thesis in 2013, Johan de Jong for explaining the importance of the condition 'generically versally deformed' for height $2$, dimension $1$ Barsotti-Tate groups in 2014, and Marco d'Addezio for several enlightening discussions on semistable Barsotti-Tate groups in 2022. K.Z. thanks A. Langer for discussions on elementary transformations of de Rham bundles and A. Javenpeykar for discussions on the degree of automorphic bundles. All three authors would further like to acknowledge the debt of this work to the recent work of Snowden-Tsimerman \cite{snowden2018constructing}. J.Y. is supported by National Natural Science Foundation of China Grant No. 12201595, the Fundamental Research Funds for the Central Universities and CAS Project for Young Scientists in Basic Research Grant No. YSBR-032.
\end{acknowledgement*}

\end{document}